\newtheorem{theorem}{Theorem}[section]
\newtheorem{corollary}[theorem]{Corollary}
\newtheorem{lemma}[theorem]{Lemma}
\newtheorem{proposition}[theorem]{Proposition}
\newtheorem*{theorem*}{Theorem}
\newtheorem*{corollary*}{Corollary}
\newtheorem{introtheorem}{Theorem}
\Crefname{introcorollary}{corollary}{corollary}
\theoremstyle{definition}
\newtheorem{definition}[theorem]{Definition}
\newtheorem{example}[theorem]{Example}
\newtheorem{conjecture}[theorem]{Conjecture}
\newtheorem*{conjecture*}{Conjecture}
\newtheorem{construction}[theorem]{Construction}
\theoremstyle{remark}
\newtheorem{remark}[theorem]{Remark}
\newtheorem*{acknowledgment}{Acknowledgment}
\newcommand{\cC}{\mathcal{C}}
\newcommand{\cX}{\mathcal{X}}
\newcommand{\kk}{\Bbbk}
\newcommand{\NN}{\mathbb{N}}
\newcommand{\ZZ}{\mathbb{Z}}
\newcommand{\LL}{\mathbb{L}}
\newcommand{\add}{\mathsf{add}\hspace{.01in}}
\newcommand{\Db}{\mathsf{D}^{\mathrm{b}}}
\renewcommand{\mod}{\mathsf{mod}\hspace{.01in}}
\newcommand{\proj}{\mathsf{proj}\hspace{.01in}}
\newcommand{\End}{\operatorname{End}\nolimits}
\newcommand{\Ext}{\operatorname{Ext}\nolimits}
\newcommand{\Tor}{\operatorname{Tor}\nolimits}
\newcommand{\gl}{\operatorname{gldim}\nolimits}
\newcommand{\Hom}{\operatorname{Hom}\nolimits}
\newcommand{\id}{\operatorname{id}\nolimits}
\newcommand{\Id}{\operatorname{Id}\nolimits}
\renewcommand{\ker}{\operatorname{Ker}\nolimits}
\newcommand{\op}{\operatorname{op}\nolimits}
\newcommand{\pdim}{\operatorname{pdim}\nolimits}
\newcommand{\rad}{\operatorname{rad}\nolimits}
\newcommand{\soc}{\operatorname{soc}\nolimits}
\renewcommand{\top}{\operatorname{top}\nolimits}
\renewcommand{\subset}{\subseteq}
\newcommand{\includepdf}[1]{
   \begin{center}
      \includegraphics{#1}
   \end{center}
}
\newcommand{\includeformula}[2][]{
   \vcenter{\hbox{\includegraphics[#1]{#2}}}
}
\newcommand{\set}[2]{\left \{ \, {#1} \, \middle \vert \, {#2} \, \right \}}
\newcommand{\s}{\mbox{}}
\begin{document}
   \title[Minimal projective resolutions over idempotent subrings]{A method for constructing minimal projective resolutions over idempotent subrings}

   \author{Carlo Klapproth}\address{Department of Mathematics, Aarhus University, 8000 Aarhus C, Denmark} 
   \email{carlo.klapproth@math.au.dk}
    
   \subjclass[2010]{16E05, 16E10, 16G1ii0}

   \begin{abstract}
      We show how to obtain minimal projective resolutions of finitely generated modules over an idempotent subring $\Gamma_e := (1-e)R(1-e)$ of a semiperfect noetherian basic ring $R$ by a construction inside $\mod R$.
      This is then applied to investigate homological properties of idempotent subrings $\Gamma_e$ under the assumption of $R/\langle 1-e\rangle$ being a right artinian ring.
      In particular, we prove the conjecture by Ingalls and Paquette that a simple module $S_e := eR /\rad eR$ with $\Ext_R^1(S_e,S_e) = 0$ is self-orthogonal, that is $\Ext^k_R(S_e,S_e)$ vanishes for all $k \geq 1$, whenever $\gl R$ and $\pdim eR(1-e)_{\Gamma_e}$ are finite.
      Indeed, a slightly more general result is established, which applies to sandwiched idempotent subrings:
      Suppose $e \in R$ is an idempotent such that all idempotent subrings $\Gamma$ sandwiched between $\Gamma_e$ and $R$, that is $\Gamma_e \subset \Gamma \subset R$, have finite global dimension.
      Then the simple summands of $S_e$ can be numbered $S_1, \dots, S_n$ such that $\Ext_R^k(S_i, S_j) = 0$ for $1 \leq j \leq i \leq n$ and all $k > 0$.
   \end{abstract}

   \maketitle

   \section*{Introduction} 
   This paper combines methods used in \cite{ingalls_homological_2015}, \cite{ingalls_homological_2017} and \cite{bravo_idempotent_2019}.

   For convenience, let $\kk$ \emph{always} be an algebraically closed field unless mentioned otherwise.
   Imagine the following situation:
   Given a semiperfect noetherian basic ring $R$ and an idempotent $e \in R$. 
   There are three questions that naturally arise:
   \begin{enumerate}
      \item What do desirable homological properties of $R$ imply for the homological behaviour of $\Gamma_e := (1-e)R(1-e)$ and which assumptions are needed for such implications? \label{enum:intro_first_question}
      \item What do desirable homological properties of $\Gamma_e$ imply for the homological behaviour of $R$ and which assumptions are needed for such implications? \label{enum:intro_second_question}
      \item Which set of assumptions is optimal in (\ref{enum:intro_first_question}) and (\ref{enum:intro_second_question})? \label{enum:intro_third_question}
   \end{enumerate}
   We will discuss this with respect to global and projective dimensions and will give a rough overview of Ingalls's, Paquette's and Bravo's work related to our paper:

   With regard to semiperfect noetherian rings, these questions have been tackled in \cite{ingalls_homological_2015} and \cite{ingalls_homological_2017}: 
   The semisimple module $S_e := eR/\rad eR$ and the quotient ring $R/\langle 1-e\rangle$ of $R$ by the ideal generated by the idempotent $1-e$ play an important role for question (\ref{enum:intro_second_question}) as $\Gamma_e$ cannot see $S_e$.
   Indeed for question (\ref{enum:intro_second_question}) the best result one could expect was established in \cite[Proposition 4.3]{ingalls_homological_2017}:
   If $\Ext_R^k(S_e, S_e)$ vanishes for all but finitely many $k \in \NN$ and $\Gamma_e$ has finite global dimension, then $R$ does too.

   On the other hand question (\ref{enum:intro_first_question}) is more difficult. 
   A ring can have homologically more complex idempotent subrings.
   Indeed, Auslander has shown in \cite{auslander1971representation} that every Artin algebra is an idempotent subalgebra of an Artin algebra of finite global dimension.
   
   Investigating question (\ref{enum:intro_first_question}), Ingalls and Paquette consider the (graded) Yoneda ring $Y_e := \Ext_R^\ast(S_e, S_e)$ of $S_e$ in \cite{ingalls_homological_2017}.
   This ring links $\gl R$ and $\gl \Gamma_e$ together:
   If $Y_e$ and $R$ have finite global then so does $\Gamma_e$.
   However, the assumption that $Y_e$ is of finite global dimension is stronger than necessary, as \cite[page 314]{ingalls_homological_2017} states.
   This matter is further discussed in \cite{ingalls_homological_2017}, but not finally resolved yet.
   With the framework we developed we are able to prove that $Y_e$ must indeed be of finite global dimension if every idempotent subring between $\Gamma_e$ and $R$ has finite global dimension:
   \begin{introtheorem}[Cf.\s\ \Cref{corollary:triangular}] \label{introcorollary:triangular}
      Let $R$ be a semiperfect noetherian basic ring of finite global dimension and $e \in R$ be an idempotent such that $R/\langle 1-e\rangle$ is right artinian.
      Then all idempotent subrings sandwiched between $\Gamma_e$ and $R$ have finite global dimension if and only if $Y_e$ is a directed artinian ring (see \Cref{def:triangular}).
   \end{introtheorem}

   Questions (\ref{enum:intro_first_question}) and (\ref{enum:intro_third_question}) also have been fully answered by Ingalls and Paquette for $e$ primitive over finite dimensional $\kk$-algebras $R$:
   If $S_e$ is self-orthogonal, i.e.\s\ $\Ext_R^k(S_e,S_e) =0$ for all $k \geq 1$, and $R$ has finite global dimension then $\Gamma_e$ has finite global dimension as shown in \cite[Proposition 4.4]{ingalls_homological_2015}.
   On the other hand, $\gl R < \infty$ and $\gl \Gamma_e < \infty$ imply that $S_e$ is self-orthogonal, by \cite[Theorem 6.5]{ingalls_homological_2015} using the strong no loops conjecture shown in \cite{igusa_proof_2011}.
   Indeed \cite[Theorem 6.5]{ingalls_homological_2015} does not require $R$, but only $R/\rad R$, to be finite dimensional over $\kk$.
   Ingalls and Paquette conjectured that this theorem generalizes to semiperfect noetherian $\kk$-algebras:
   
   \begin{conjecture*}[{\cite[Conjecture 4.13]{ingalls_homological_2015}}, cf.\s\ \Cref{conj:ingalls}] \label{introconj:ingalls}
     Let $R$ be a noetherian $\kk$-algebra which is either semiperfect or positively graded. 
     Assume that $e \in R$ is a primitive idempotent (and of degree zero if $R$ is positively graded) with $\Ext_R^1(S_e,S_e) = 0$.
     If both $\pdim_R S_e$ and $\pdim_{\Gamma_e} eR(1-e)$ are finite, then $S_e$ is self-orthogonal.
   \end{conjecture*}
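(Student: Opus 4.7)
The plan is to combine the paper's construction of minimal projective resolutions over $\Gamma_e$ with an induction on the cohomological degree; I address the semiperfect case, the positively graded one being analogous.

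Since $e$ is primitive, $R$ is basic semiperfect and $\kk$ is algebraically closed, $S_e$ is a simple module with $\End_R(S_e) = \kk$. Let $P_\bullet \to S_e$ be the minimal projective resolution over $R$ and write $P_k = (eR)^{a_k} \oplus ((1-e)R)^{b_k}$. Minimality gives $\Ext_R^k(S_e, S_e) \cong \kk^{a_k}$, so it suffices to prove $a_k = 0$ for all $k \ge 1$; the case $k = 1$ is the hypothesis. Applying the exact functor $(-)(1-e)$ to $P_\bullet$ and using $S_e(1-e) = 0$ together with $(\rad eR)(1-e) = eR(1-e)$ produces an exact complex
\[
\cdots \to P_{k+1}(1-e) \to \cdots \to P_1(1-e) \to eR(1-e) \to 0
\]
of right $\Gamma_e$-modules with $P_k(1-e) = (eR(1-e))^{a_k} \oplus \Gamma_e^{b_k}$. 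The required vanishing is equivalent to this complex being a genuine \emph{projective} resolution of $eR(1-e)$ over $\Gamma_e$, that is, to having $a_k = 0$ for all $k \ge 1$.

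For the inductive step, assume $a_1 = \cdots = a_{n-1} = 0$, so the truncation up to degree $n-1$ is a partial projective resolution and the first obstruction is the summand $(eR(1-e))^{a_n}$ in $P_n(1-e)$. I would invoke the paper's method to build, inside $\mod R$, the minimal projective resolution of $eR(1-e)$ over $\Gamma_e$—which terminates after $d := \pdim_{\Gamma_e} eR(1-e)$ steps and consists only of $\Gamma_e$-summands—and compare it with the above non-minimal resolution. The offending summand $(eR(1-e))^{a_n}$ can neither be cancelled against a $\Gamma_e$-summand of the minimal resolution nor against a contractible pair pointing to degree $n-1$ (which has no $(eR(1-e))$-summand by the inductive hypothesis), so it must be matched with a further $(eR(1-e))$-summand at some higher degree, forcing some $a_{n'} \ne 0$ with $n' > n$. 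Iterating, a single non-zero $a_n$ cascades into infinitely many non-zero $a_{n'}$, which contradicts $\pdim_R S_e < \infty$; hence $a_n = 0$.

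The main obstacle is making this cascading argument rigorous. Concretely, one must use the paper's construction to show that any $(eR(1-e))$-summand of the induced $\Gamma_e$-complex can only be removed by matching with another $(eR(1-e))$-summand further along in $P_\bullet(1-e)$, and never against a $\Gamma_e$-summand of the minimal $\Gamma_e$-resolution of $eR(1-e)$. This is precisely the kind of bookkeeping the paper's construction is designed to provide: applied to $M = \rad eR = \Omega S_e$, it gives the explicit structural link between the minimal $R$-resolution of $S_e$ and the minimal $\Gamma_e$-resolution of $eR(1-e)$ that is required to run the cascade and finish the induction.
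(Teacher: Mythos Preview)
Your outline has a genuine gap at exactly the point you flag: the ``cascading'' step is not an argument, and the intuition behind it is based on a premise that does not hold.  The matching-of-summands heuristic comes from the fact that two \emph{projective} resolutions of the same module differ by a contractible complex, so non-minimal projective summands cancel in adjacent degrees.  But $P_\bullet(1-e)$ is \emph{not} a projective resolution of $eR(1-e)$ over $\Gamma_e$ as soon as some $a_k\neq 0$: the terms $(eR(1-e))^{a_k}$ are in general not projective $\Gamma_e$-modules.  Hence there is no decomposition of $P_\bullet(1-e)$ into ``minimal projective resolution plus contractible pieces'', and no reason why an $(eR(1-e))$-summand in degree $n$ must be matched by one in degree $n+1$.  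Nothing you have written excludes, for instance, that the map $P_{n+1}(1-e)=\Gamma_e^{b_{n+1}}\to P_n(1-e)$ already surjects onto the kernel of $P_n(1-e)\to P_{n-1}(1-e)$ while $a_{n+1}=0$; one would need an extra invariant (for example a Tor computation) to rule this out, and that is precisely what your sketch lacks.

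The paper's proof does not try to push the minimal $R$-resolution of $S_e$ through $(-)(1-e)$ at all.  Instead it runs Construction~2.1 with $\Omega_0=eR$, producing modules $X_i\in\mod R/\langle 1-e\rangle$ and $P_i\in\add(1-e)R$ such that $F_e(P_\bullet)$ is the \emph{minimal} $\Gamma_e$-resolution of $eR(1-e)$.  Proposition~2.3 identifies $X_{i+1}\cong\Tor_i^{\Gamma_e}(eR(1-e),(1-e)R)$, so the hypothesis $\pdim_{\Gamma_e}eR(1-e)<\infty$ forces $X_i=0$ for $i\gg 0$.  Picking the largest $i$ with $X_i\neq 0$ and chasing the long exact $\Ext_R(-,S_e)$-sequences attached to the short exact sequences of the construction gives an isomorphism $\Ext_R^{m}(X_i,S_e)\cong\Ext_R^{m+i}(eR,S_e)$ with $m=m(X_i,S_e)$; since $eR$ is projective this forces $m=i=0$ and hence $m(S_e,S_e)=0$.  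The finiteness of $\pdim_R S_e$ is used only to guarantee $m(S_e,S_e)<\infty$ so that the $m$-property holds, not to bound the length of any complex.  Your proposed link ``Construction applied to $\rad eR$ follows the minimal $R$-resolution of $S_e$'' is also not what happens: the construction diverges from that resolution at the first degree with $a_k\neq 0$, because it passes to $K_k=\ker(\Omega_k\to X_k)$ before taking a projective cover.
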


   With respect to semiperfect noetherian rings we can give a proof for this in \Cref{proof:ingalls} on \cpageref{proof:ingalls}. 
   Indeed it will be a corollary of a more general statement:
   \begin{introtheorem}[Cf.\s\ \Cref{corollary:generalization}] \label{introcorollary:generalization}
      Let $R$ be a semiperfect noetherian basic ring, $e \in R$ be a primitive idempotent and $R/\langle 1-e\rangle$ be artinian.
      If $\Tor_k^{\Gamma_e}(eR(1-e), (1-e)Re)$ and $\Ext_{R}^k(S_e,S_e)$ vanish for all but finitely many $k \in \NN$, then $S_e$ is self-orthogonal.
   \end{introtheorem}

   \Cref{introcorollary:triangular} and \Cref{introcorollary:generalization} are consequences of the main results of this paper which are
   \begin{enumerate}
      \item \emph{\Cref{cons:the_construction}} which allows us to calculate a complex in $\mod R$ which maps to a minimal projective resolution under the functor $- \otimes_R R(1-e)$ and
      \item the introduction of \emph{\Cref{def:m-property}}. 
   \end{enumerate}

   \section{Preliminaries and notations} \label{sec:preliminaries}

   For morphisms $X \xrightarrow{f} X'$ and $X' \xrightarrow{g} X''$ we denote the composite $g \circ f \colon X \xrightarrow{f} X' \xrightarrow{g} X''$ also by $gf$ and for functors $F \colon \cC \to \cC'$ and $G \colon \cC' \to \cC''$ we also denote their composite $G \circ F \colon \cC \to \cC''$ by $GF$.
   All modules in this paper are seen as right modules.
   For a subcategory $\cX \subset \cC$ we will define the left perpendicular subcategory of $\cX$ in $\cC$ to be the full subcategory ${}^\perp \cX := \{ C \in \cC \,|\, \forall X \in \cX \colon \Hom_{\cC}(C, X) = 0 \}$.

   For convenience, we introduce a slightly modified notation to that of \cite{ingalls_homological_2015}, \cite{ingalls_homological_2017} and \cite{bravo_idempotent_2019}.
   Throughout this paper let $R$ be a semiperfect noetherian ring.
   We want to emphasize that throughout this paper \emph{noetherian} means left and right noetherian.
   For details on semiperfect (noetherian) rings we refer the reader to \cite[$\S$27]{anderson_rings_1992}.
   However, we want to remind the reader that $\proj R$ and $\add R / \rad R$ are Krull--Schmidt categories by \cite[Theorem 12.6]{anderson_rings_1992} and that every semiperfect ring is Morita equivalent to a basic semiperfect ring by \cite[Proposition 27.14]{anderson_rings_1992}. 
   As being noetherian is preserved by Morita equivalence we can assume that \emph{$R$ is basic} by the latter.
   Notice, for a non-basic ring $R$ some theorems need a slight reformulation.
   Let $e \in R$ always be an arbitrary idempotent.
   We denote by $\Gamma_{e}$ the idempotent subring $(1-e)R(1-e)$ obtained from $R$ by deleting $e$, by $F_{e}$ the functor\footnote{This functor indeed maps finitely generated modules to finitely generated modules (see \Cref{rmk:functor_preserves_fg_modules}).} $- \otimes_R R(1-e) \colon \mod R \to \mod \Gamma_{e}$ and by $S_{e}$ the semisimple module $eR / \rad eR$ belonging to the idempotent $e$.
   The latter semisimple module is in the kernel of $F_e$.
   By $Y_{e}$ we will denote the (graded) Yoneda ring $\Ext_R^\ast(S_{e},S_{e})$ with the Yoneda product as multiplication.

   As ${}_R R(1-e)$ is projective, $F_{e}$ is an exact functor. 
   Let $\langle 1-e \rangle$ be the (two sided) ideal of $R$ generated by $1-e$. 
   Recall the following:

   \begin{lemma} \label{lemma:serre}
      The category $\mod R/\langle 1-e\rangle$ is a full subcategory of $\mod R$ via restriction of scalars using the canonical ring epimorphism $R \to R/\langle 1-e\rangle$. 
      Further $\mod R/\langle 1-e\rangle$ is the kernel of $F_e$.
   \end{lemma}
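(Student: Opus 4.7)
The plan is to prove the two assertions separately: both are essentially formal, relying on one general fact about restriction of scalars and on one concrete tensor-product identification.

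For the first claim, I would invoke the general principle that for any surjective ring homomorphism $\pi\colon R \to S$, restriction of scalars along $\pi$ realises $\mod S$ as the full subcategory of $\mod R$ whose objects are annihilated by $\ker \pi$. An $R$-action on an abelian group $M$ factors through $\pi$ exactly when $M\cdot \ker \pi = 0$, and for two such modules $M,N$ every $R$-linear map $M\to N$ is automatically $S$-linear because, by surjectivity of $\pi$, every $S$-scalar lifts to an $R$-scalar. Applied to the canonical surjection $R\to R/\langle 1-e\rangle$ this yields the first statement.

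For the kernel identification, the key observation is the natural isomorphism
\[
   F_e(M) \;=\; M\otimes_R R(1-e) \;\xrightarrow{\sim}\; M(1-e), \qquad m\otimes r(1-e)\mapsto mr(1-e),
\]
of right $\Gamma_e$-modules. I would write down the obvious inverse $m(1-e)\mapsto m\otimes(1-e)$ and verify that it is well defined using the implication $m(1-e)=0 \Rightarrow m=me$, which gives $m\otimes(1-e)=me\otimes(1-e)=m\otimes e(1-e)=0$. Granted this isomorphism, $M\in\ker F_e$ is equivalent to $M(1-e)=0$.

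Finally, I would observe that $M(1-e)=0$ is equivalent to $M\cdot\langle 1-e\rangle=0$. The reverse direction is trivial since $1-e\in\langle 1-e\rangle$. For the forward direction, write $\langle 1-e\rangle = R(1-e)R$; then every element of $\langle 1-e\rangle$ is a sum of elements $r(1-e)s$, and $M\cdot r(1-e)s=(Mr)(1-e)\cdot s\subset M(1-e)\cdot s=0$. Chaining this equivalence with the one from the previous paragraph shows that $\ker F_e$ coincides with the image of $\mod R/\langle 1-e\rangle$ under restriction of scalars. There is no real obstacle — the argument is purely definitional — so the only care required is tracking well-definedness in the tensor-product identification.
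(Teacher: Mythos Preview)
Your proposal is correct and follows essentially the same approach as the paper's proof. You are more explicit than the paper in justifying the identification $F_e(M)\cong M(1-e)$ (the paper simply asserts that $F_e$ is right multiplication by $1-e$) and in spelling out both directions of $M(1-e)=0 \Leftrightarrow M\cdot\langle 1-e\rangle = 0$, but the underlying argument is identical.
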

   \begin{proof}
      By restriction of scalars every $R/\langle 1-e\rangle$-module becomes an $R$-module without changing the underlying set.
      This way $\mod R/\langle 1-e\rangle$ becomes a subcategory of $\mod R$.
      Because $R \to R /\langle 1-e\rangle$ is surjective, an $R$-linear mapping between $R/\langle 1-e\rangle$-modules is $R/\langle 1-e\rangle$-linear.
      This establishes that $\mod R/\langle 1-e\rangle \subset \mod R$ is full.

      Now if $(1-e)$ operates trivially on an $R$-module $M$ then the ideal $\langle 1-e\rangle$ is contained in the annihilator of $M$ and hence the morphism of rings $R \to \End_{\ZZ}(M)^{\op},\, r \mapsto (m \mapsto mr)$ factors through $R/\langle 1-e\rangle$.
      This shows that $M$ is an $R/\langle 1-e\rangle$-module in this case.
      Therefore, we can describe $\mod R/\langle 1-e\rangle$ as the subcategory of $\mod R$ annihilated by $(1-e)$.
      Because $F_{e}$ is actually just right multiplication with $(1-e)$ the second part of the lemma follows.
   \end{proof}
   
   The subcategory $\mod R/\langle 1-e\rangle \subset \mod R$ has more favourable properties.
   Recall the following definition:
   \begin{definition}
      Let $\cC$ be a category and $\cX \subset \cC$ be a full subcategory.
      A morphism $f \colon C \to X$ from an object $C \in \cC$ to an object $X \in \cX$ is called
      \begin{enumerate}
         \item \emph{$\cX$-preenvelope} if $\Hom_\cC(f, X') \colon \Hom_\cC(X,X') \to \Hom_\cC(C,X')$ is surjective for every $X' \in \cX$,
         \item \emph{$\cX$-envelope} if it is an $\cX$-preenvelope and every $g \in \End_\cC(X)$ with $gf = f$ is an isomorphism and
         \item \emph{strong $\cX$-envelope} if $\Hom_\cC(f, X') \colon \Hom_\cC(X,X') \to \Hom_\cC(C,X')$ is bijective for every $X' \in \cX$.
      \end{enumerate}
      We say $\cX$ is \emph{preenveloping}, \emph{enveloping} respectively \emph{strongly enveloping} in $\cC$ if every object $C \in \cC$ admits an $\cX$-preenvelope, $\cX$-envelope respectively strong $\cX$-envelope.
   \end{definition}
   Every strong $\cX$-envelope is an $\cX$-envelope as $\Hom_\cC(f,X) \colon \Hom_\cC(X,X) \to \Hom_\cC(C,X)$ being injective implies that the only $g \in \End_\cC(X)$ with $gf = f$ is $g = \id_X$.

   Let $L_e := - \otimes R/\langle 1-e \rangle \colon \mod R \to \mod R /\langle 1-e \rangle$ and let $I_e \colon \mod R/\langle 1-e \rangle \to \mod R$ be the canonical inclusion functor.
   As for example mentioned in \cite[Example 3.3]{green_reduction_2018}, there is a recollement
   \includepdf{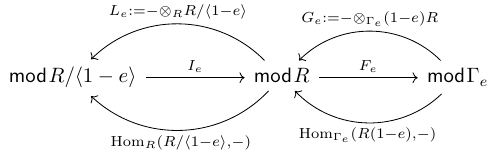}
   of abelian categories.
   We refer the reader to \cite[Proposition 3.2]{green_reduction_2018} and \cite{psaroudakis_homological_2014} for further details on this recollement. 
   In particular, recall the following:
   
   \begin{lemma}
      The category $\mod R/\langle 1-e \rangle$ is an enveloping Serre subcategory of $\mod R$.
   \end{lemma}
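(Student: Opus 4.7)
My plan is to establish the two assertions separately: that $\mod R/\langle 1-e\rangle$ is Serre, and that every $M \in \mod R$ admits an envelope into it. Both follow directly from \cref{lemma:serre} together with the universal property of the quotient functor $L_e$, so I expect no real obstacle; the proof is essentially a routine check.

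For the Serre property, I would invoke \cref{lemma:serre} to identify $\mod R/\langle 1-e\rangle$ with $\ker F_e$. Since ${}_R R(1-e)$ is projective, $F_e = -\otimes_R R(1-e)$ is exact. The kernel of any exact functor between abelian categories is automatically closed under subobjects, quotient objects, and extensions, via the obvious diagram chases in the short exact sequences obtained by applying $F_e$; hence $\mod R/\langle 1-e\rangle$ is Serre in $\mod R$.

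For the enveloping property, the natural candidate for an envelope of $M \in \mod R$ is the canonical surjection
\[
   \eta_M \colon M \twoheadrightarrow M/M\langle 1-e\rangle \;\cong\; M \otimes_R R/\langle 1-e\rangle \;=\; L_e(M),
\]
which lies in $\mod R/\langle 1-e\rangle$ by construction. I would then verify the strong universal property: for any $N \in \mod R/\langle 1-e\rangle$ one has $N \langle 1-e\rangle = 0$, so an $R$-linear map $f\colon M \to N$ sends $M\langle 1-e\rangle$ to $0$ and thus factors uniquely through $\eta_M$. Consequently $\Hom_R(\eta_M, N)$ is a bijection for every $N$ in the subcategory, making $\eta_M$ a strong $\mod R/\langle 1-e\rangle$-envelope. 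Since every strong envelope is an envelope (as remarked after the definition), this proves that $\mod R/\langle 1-e\rangle$ is enveloping in $\mod R$, which completes the argument.
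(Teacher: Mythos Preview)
Your proof is correct and follows essentially the same approach as the paper: both identify the Serre property via $\ker F_e$ and produce the envelope as the unit $\eta_M \colon M \to I_eL_e(M)$. The only difference is cosmetic: the paper phrases the strong-envelope property abstractly through the adjunction $L_e \dashv I_e$ (with a reference to localisation functors), whereas you verify the unique factorisation through $M \to M/M\langle 1-e\rangle$ directly by hand.
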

   \begin{proof}
      As $\mod R/\langle 1-e\rangle$ is the kernel of the exact functor $F_e$ it is a Serre subcategory of $\mod R$.
      We have an adjoint functor pair $L_e \dashv I_e$ with unit $\eta \colon \Id_{\mod R} \to I_e L_e$.
      Notice, $I_e$ is the inclusion of $\mod R/\langle 1-e \rangle$ into $\mod R$, so we have $I_e(X) = X$ for $X \in \mod R/\langle 1 -e \rangle$ and $I_e$ is fully faithful.
      Therefore, \cite[Proposition 2.4.1]{krause_localization_2009} shows that $I_e L_e (\eta_M)$ is invertible for $M \in \mod R$. 
      For any $X \in \mod R/\langle1-e\rangle$ and any $M \in \mod R$ the commutative diagram
      \includepdf{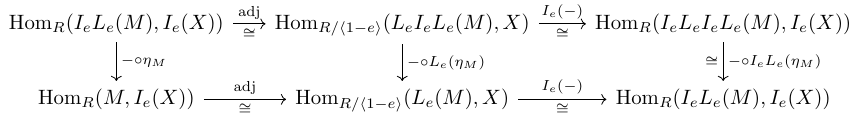}
      now shows that $\Hom_R(\eta_M, I_e(X))\colon \Hom_R(I_e L_e (M), I_e(X)) \to \Hom_R(M, I_e(X))$ is an isomorphism.
      So, the unit $\eta_M$ defines a strong $\mod R/\langle 1-e\rangle$-envelope for $M \in \mod R$.
   \end{proof}

   This implies that the simple modules of $\mod R/\langle 1-e\rangle$ are precisely the simple modules of $\mod R$ lying in the kernel of $F_e$ and that $I_e$ preserves and reflects kernels, cokernels and short exact sequences.
   In particular, $R/\langle 1-e \rangle$ has a finite composition series as an $R/\langle 1-e\rangle$-module if and only if it has a finite composition series as an $R$-module.
   Therefore, $R/\langle 1-e\rangle$ is a right artinian ring if and only if $R/\langle 1-e\rangle_R$ is a right artinian $R$-module. 
   It is a semisimple ring if and only if $\Ext_R^1(S_e,S_e) = 0$ as the inclusion of a Serre subcategory induces an isomorphism $\Ext_R^1(S_e,S_e) \cong \Ext_{R/\langle 1-e\rangle}^1(S_e,S_e)$ and because $S_e = (R/\langle 1-e\rangle)/(\rad R/\langle 1-e\rangle)$ as an $R/\langle 1-e\rangle$-module.

   In general envelopes with respect to subobject closed subcategories of abelian categories are epic. 
   In our case this is trivially satisfied as the $\mod R /\langle 1-e\rangle$-envelope $\eta_M$ of $M \in \mod R$ is the canonical quotient map $M \to M/MI$ modulo the submodule $MI \subset M$, where ${I = \langle 1-e \rangle}$.
   So, the following is just the canonical sequence $0\to MI\to M \to M/MI \to 0$:

   \begin{lemma} \label{lemma:r1-eenvelope}
      For $M \in \mod R$ the $R/\langle 1-e\rangle$-envelope $\eta_M \colon M \to I_e L_e(M)$ of $M$ induces a short exact sequence
      \includepdf{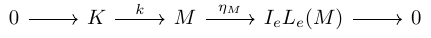}
      with $I_e L_e (M) \in \mod R/\langle 1-e\rangle$ and $K$ in ${}^\perp\mod R/\langle 1-e\rangle$.
    \end{lemma}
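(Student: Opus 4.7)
The plan is to verify two statements separately: first, that $\eta_M$ is an epimorphism (so that setting $K := \ker \eta_M$ gives the asserted short exact sequence), and second, that $\Hom_R(K, X) = 0$ for every $X \in \mod R/\langle 1-e\rangle$. For the first, I would simply invoke the identification recorded in the paragraph preceding the lemma: $\eta_M$ is isomorphic to the canonical surjection $M \cong M \otimes_R R \to M \otimes_R R/\langle 1-e\rangle$, which is surjective because $R \to R/\langle 1-e\rangle$ is. This yields the short exact sequence $0 \to K \to M \xto{\eta_M} I_e L_e(M) \to 0$.

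For the orthogonality condition, my plan is to reduce the vanishing to a statement about $L_e K$ via the adjunction $L_e \dashv I_e$. Viewing an object $X \in \mod R/\langle 1-e\rangle$ as an object of $\mod R$ amounts to applying $I_e$, so the adjunction gives
\[
\Hom_R(K, I_e X) \cong \Hom_{R/\langle 1-e\rangle}(L_e K, X),
\]
and it is enough to show $L_e K = 0$. The functor $L_e = - \otimes_R R/\langle 1-e\rangle$ is right exact, so applying it to the short exact sequence produces a right exact sequence terminating in $L_e M \xto{L_e \eta_M} L_e I_e L_e M \to 0$. The key observation is that $L_e \eta_M$ is an isomorphism: because $I_e$ is fully faithful, the counit $\epsilon \colon L_e I_e \to \Id$ of the adjunction is an isomorphism, and the triangle identity $\epsilon_{L_e M} \circ L_e(\eta_M) = \id_{L_e M}$ forces $L_e(\eta_M)$ to be its two-sided inverse. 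Right exactness then yields $L_e K = 0$.

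No step here looks like a genuine obstacle: the surjectivity is essentially spelled out in the paragraph before the lemma, and the orthogonality reduces mechanically to the standard recollement fact that a fully faithful right adjoint makes the counit an isomorphism. The only care needed is to use the \emph{right} exactness of $L_e$ (not exactness, since $L_e$ need not be exact in general) to conclude $L_e K = 0$ from a right exact sequence rather than a short exact one.
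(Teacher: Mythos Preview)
Your overall strategy is sound and more elementary than the paper's proof, which invokes the triangulated Wakamatsu lemma from \cite{jorgensen_auslander-reiten_2006} inside $\Db(\mod R)$. Reducing $K \in {}^\perp\mod R/\langle 1-e\rangle$ to the vanishing of $L_e K$ via the adjunction $L_e \dashv I_e$ is the right idea, and the counit/triangle-identity argument that $L_e(\eta_M)$ is an isomorphism is correct.

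However, the final step contains a genuine gap. From the right exact sequence
\[
L_e K \longrightarrow L_e M \xrightarrow{\ L_e\eta_M\ } L_e I_e L_e M \longrightarrow 0
\]
and the fact that $L_e\eta_M$ is an isomorphism, right exactness only tells you that the image of $L_e K \to L_e M$ equals $\ker(L_e\eta_M)=0$, i.e.\ that this map is \emph{zero}. It does \emph{not} follow that $L_e K = 0$: a right exact functor can kill a monomorphism without killing its source. So ``right exactness then yields $L_e K = 0$'' is not justified as written.

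The repair is short and fits your approach. The two-sided ideal $\langle 1-e\rangle = R(1-e)R$ is idempotent because $(1-e)^2 = 1-e$ forces $R(1-e)R \cdot R(1-e)R = R(1-e)R$. Using the concrete identification you already invoked, $K = \ker\bigl(M \to M/M\langle 1-e\rangle\bigr) = M\langle 1-e\rangle$, so
\[
K\langle 1-e\rangle \;=\; M\langle 1-e\rangle^2 \;=\; M\langle 1-e\rangle \;=\; K,
\]
and hence $L_e K \cong K/K\langle 1-e\rangle = 0$. With this line inserted, your argument is complete and gives a direct, recollement-free alternative to the paper's Wakamatsu-lemma proof.
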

    \begin{proof}
      The morphism $\eta_M$ is epic as $\mod R/\langle 1-e\rangle \subset \mod R$ is a Serre subcategory.
      For the same reason the subcategory $\mod R/\langle 1-e\rangle \subset \mod R$ is extension closed.
      Now notice $\mod R/\langle 1-e\rangle \subset \mod R \subset \Db(\mod R)$ is also a full and extension closed subcategory.
      By the dual of the triangulated version of Wakamatsu's lemma found in \cite[Lemma 2.1]{jorgensen_auslander-reiten_2006} we obtain that the kernel $k \colon K \to M$ of $\eta_M$ satisfies $\Hom_{\smash{\Db(\mod R)}}(K, X') = 0$ for all $X' \in \mod R/\langle 1-e\rangle$ which shows $K \in {}^{\perp}\mod R/\langle 1-e\rangle$.
   \end{proof}
  
   \begin{remark} \label{remark:projcover}
   In the above, as $R$ is semiperfect the projective cover of the finitely generated module $K$ can be calculated as follows: 
   Take a projective cover $p' \colon P \to \top K$ and factor it through the canonical epimorphism $t \colon K \to \top K$, say $tp = p'$ for some $p \colon P \to K$.
   By Nakayama's lemma $p$ is an epimorphism and because $p' \colon P \to \top K$ had radical kernel $p$ does so too.
   Hence, $p$ is a projective cover of $K$.
   However, $K \in {}^\perp\mod R/\langle 1-e\rangle$ means that no simple summand of $S_e$ appears in the top of $K$.
   Hence, the projective cover of $K$ lies in $\add R(1-e)$.
   \end{remark}

   Notice that the terms of a minimal projective resolution of an $R$-module $M$ determine the groups $\Ext_R^k(M,S_e)$ for all $k \in \NN$ completely:
   \begin{remark} \label{remark:determine}
      Let $M \in \mod R$ and $\cdots \to P_2 \to P_1 \to P_0 =: P_\ast$ be a minimal projective resolution of $M$.
      Now $\Ext_R^k(M,S_e)$ is isomorphic to the degree $k$ homology of the image of $P_\ast$ under $\Hom_R(-,S_e)$.
      However, applying $\Hom_R(-, S_e)$ to a morphism with radical image yields the zero morphism, because $S_e$ is semisimple.
      As all morphisms in $P_\ast$ have radical image we have $\Ext_R^k(M, S_e) \cong \Hom_R(P_k, S_e)$.
      In particular, $P_k \in \add (1-e)R$ if and only if $\Ext_R^k(M, S_e) = 0$.
   \end{remark}
   Hence, the following lemma can be used to show that $F_e$ preserves minimal projective resolutions of $R$-modules $M$ with $\Ext_R^\ast(M,S_e) = 0$:

   \begin{lemma} \label{lemma:equivalence}
      The restriction $F_e \colon \add (1-e)R \to \proj \Gamma_e$ is an equivalence and preserves and reflects radical morphisms.
   \end{lemma}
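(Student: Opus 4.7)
The plan is to establish the equivalence first, and then to derive the radical statement as a formal consequence of the equivalence between Krull--Schmidt categories. The key input is that $F_e$ sends the generator $(1-e)R$ of $\add(1-e)R$ to the generator $\Gamma_e$ of $\proj \Gamma_e$: indeed
\[
   F_e((1-e)R) = (1-e)R \otimes_R R(1-e) \cong (1-e)R(1-e) = \Gamma_e
\]
as right $\Gamma_e$-modules. Since $F_e$ is additive, this gives $F_e(\add(1-e)R) \subseteq \add \Gamma_e = \proj \Gamma_e$; essential surjectivity then follows because every finitely generated projective $\Gamma_e$-module is a direct summand of some $\Gamma_e^n \cong F_e((1-e)R^n)$ and $\add(1-e)R$ is closed under summands.

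For fully faithfulness it suffices by additivity to check bijectivity on the single object $(1-e)R$. I would identify $\Hom_R((1-e)R, (1-e)R) \cong (1-e)R(1-e) \cong \End_{\Gamma_e}(\Gamma_e)$, both via the standard ``evaluate at $1-e$'' maps, and then verify that the comparison map induced by $F_e$ becomes the identity of $\Gamma_e$ under these identifications. This computation is routine but needs some care to reconcile the paper's backward composition convention $fg = g \circ f$ with the usual conventions; bijectivity, which is all that is needed here, is insensitive to this.

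For preservation and reflection of radical morphisms I would exploit that both $\add(1-e)R$ and $\proj\Gamma_e$ are Krull--Schmidt categories: the first because $R$ is semiperfect, and the second because $\Gamma_e = (1-e)R(1-e)$ is the corner ring of a semiperfect ring, hence itself semiperfect. In any Krull--Schmidt additive category the radical has an intrinsic characterisation --- for instance, $f\colon X \to Y$ is radical iff $1_X - gf$ is invertible in $\End(X)$ for every $g\colon Y \to X$ --- and such an intrinsic notion is automatically preserved and reflected by any additive equivalence. The main subtlety, rather than obstacle, is really just flagging that $\Gamma_e$ is semiperfect so that both sides are Krull--Schmidt; beyond that, everything is a formal unwinding of the corner-ring equivalence.
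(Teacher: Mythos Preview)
Your proposal is correct and follows essentially the same two-step strategy as the paper: first establish the equivalence, then invoke an intrinsic Krull--Schmidt characterisation of radical morphisms which any additive equivalence automatically preserves and reflects. The only cosmetic differences are that the paper exhibits the quasi-inverse $G_e = -\otimes_{\Gamma_e}(1-e)R$ directly rather than verifying full faithfulness on the generator, and phrases the radical condition as ``does not induce an isomorphism on a non-trivial direct summand'' instead of your ``$1_X - gf$ is invertible for all $g$'' formulation.
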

   \begin{proof}
      The inverse of the restriction of $F_e$ is given by $- \otimes_{\Gamma_e} (1-e)R \colon \proj \Gamma_e \to \add (1-e)R$.
      A morphism between projective modules over a semiperfect noetherian ring is radical if and only if it does not induce an isomorphism between a non-trivial direct summand in its domain to a direct summand in its codomain.
      This property is clearly preserved and therefore reflected by additive equivalences between additive subcategories. 
   \end{proof}

   \section{The Construction} \label{sec:the_construction}
   With the preliminary work done, we can now formulate our main result:
   How to calculate minimal projective resolutions over an idempotent subring $\Gamma_e$ by a construction in $\mod R$.

   The main idea of this \Cref{cons:the_construction} is as follows: 
   The functor $F_e$ preserves epis and for a projective $R$-module $P$ the image $F_e(P)$ is a projective $\Gamma_e$-module if $P \in \add (1-e)R$.
   Projective covers over a semiperfect noetherian ring are uniquely determined by the top of the module covered.
   Hence, $F_e$ preserves projective covers of $R$-modules not having a summand of $S_e$ in their top.
   One can use $R/\langle 1-e\rangle$ approximations to remove precisely the unwanted summands of $S_e$ from the top of modules.
   Calculating projective resolutions can then be done iteratively by calculating projective covers.

   \begin{construction} \label{cons:the_construction}
      Let $\Omega_0$ be a finitely generated $R$-module.
      Define inductively
      \begin{enumerate}
         \item $f_i \colon \Omega_i \to X_i$ as the $\mod R/\langle 1-e\rangle$-envelope of $\Omega_i$ defined in \Cref{lemma:r1-eenvelope}, 
         \item $k_i \colon K_i \to \Omega_i$ as the kernel of $f_i$ as in \Cref{lemma:r1-eenvelope}, 
         \item $g_i \colon P_i \to K_i$ as a projective cover of $K_i$ and 
         \item $h_i \colon \Omega_{i+1} \to P_i$ as the kernel of $g_i$ 
      \end{enumerate}
      for all $i \in \NN$.
      This way the infinite tree in \Cref{fig:infinite_tree}
      \begin{figure}[ht]
         \includepdf{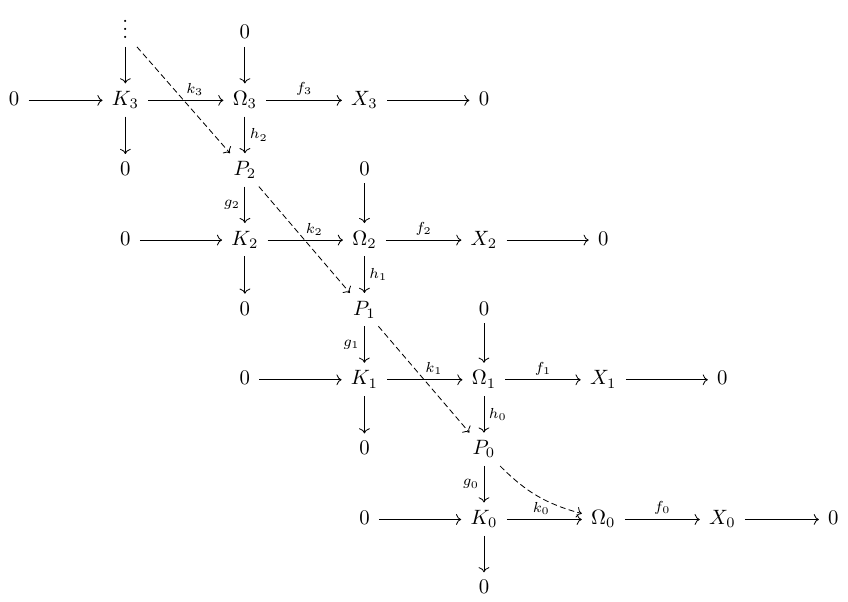}
         \caption{The infinite tree obtained from \Cref{cons:the_construction}}
         \label{fig:infinite_tree}
      \end{figure}
      is obtained, where all rows and columns are exact sequences and the dashed arrows are defined by composition.
      Notice, the horizontal short exact sequences are precisely the short exact sequences constructed in \Cref{lemma:r1-eenvelope}.
      Moreover, $P_i \in \add (1-e)R$ for $i \in \NN$ by \Cref{remark:projcover} as $K_k$ is in ${}^\perp \mod R/\langle 1-e\rangle$ by \Cref{lemma:r1-eenvelope}.
      Applying $F_e$ to the dashed arrows yields an augmented minimal projective resolution of $F_e(\Omega_0)$ as shown in \Cref{thm:the_construction_works}.
      As $F_e(X_\ast)$ vanishes, $F_e(\Omega_i) \cong F_e(K_i)$ is the $i$-syzygy of $F_e(\Omega_0)$.
      Moreover, $X_i$ is the degree $i$ homology of $F_e(\Omega_0) \otimes_{\smash{\Gamma_e}}^{\smash{\LL}}(1-e)R$ for $i > 0$, where $-\otimes_{\Gamma_e}^{\smash{\LL}} (1-e)R$ is the left derived functor of $- \otimes (1-e)R \colon \mod \Gamma_e \to \mod R$.
      This is shown in \Cref{prop:homology}.
   \end{construction}
  
   \Cref{cons:the_construction} is a generalization of the methods used in \cite[Proposition 4.9]{ingalls_homological_2015} dropping the assumption $\Ext_R^1(S_e,S_e) = 0$.
   This method also took inspiration from the method shown in \cite[Lemma 2.3]{bravo_idempotent_2019}.

   This construction is particularly helpful for homological questions, as we will see in \Cref{sec:applications}, but we can also apply this to ring theoretic questions:

   \begin{remark} \label{rmk:functor_preserves_fg_modules}
      As $F_e(P)$ is a finitely generated projective $\Gamma_e$-module for $P \in \add(1-e)R$ the first step of \Cref{cons:the_construction} shows that $F_e$ maps finitely generated $R$-modules to finitely generated $\Gamma_e$-modules.
   \end{remark}

   The following \namecref{thm:the_construction_works} uses the notation from \Cref{cons:the_construction}:
   \begin{theorem} \label{thm:the_construction_works}
      Let $\Omega_0 \in \mod R$.
      Then the augmented complex $(C_\ast,d_\ast,\varepsilon)$ with objects $C_i := F_e(P_i)$, differentials $d_i := F_e(h_i k_{i+1} g_{i+1})  \colon F_e(P_{i+1}) \to F_e(P_i)$ in degrees $i \in \NN$ and augmentation $\varepsilon := k_0 g_0 \colon F_e(P_0) \to F_e(\Omega_0) $ is a minimal projective resolution of $F_e(\Omega_0)$.
   \end{theorem}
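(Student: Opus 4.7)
The plan is to verify four things about the augmented complex $(C, d, \varepsilon)$: (a) each $C_i = F_e(P_i)$ is a finitely generated projective $\Gamma_e$-module; (b) the complex is exact in positive degrees and at the augmentation; (c) each differential $d_i$ is a radical morphism between projectives; and (d) the augmentation is in fact a projective cover, not just a surjection. Step (a) is immediate: by \cref{remark:projcover} all $P_i$ lie in $\add (1-e)R$, so \cref{lemma:equivalence} ensures $F_e(P_i) \in \proj \Gamma_e$.

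For step (b), the critical input is that $X_i \in \mod R/\langle 1-e\rangle$ and hence $F_e(X_i)=0$ by \cref{lemma:serre}. Since $F_e$ is exact, applying it to the horizontal sequences $0 \to K_i \to \Omega_i \to X_i \to 0$ makes $F_e(k_i)$ into an isomorphism, while the vertical sequences $0 \to \Omega_{i+1} \to P_i \to K_i \to 0$ remain exact after applying $F_e$. Unravelling $d_i = F_e(g_{i+1})\,F_e(k_{i+1})\,F_e(h_i)$ and using that $F_e(k_{i+1})\,F_e(h_i)$ is monic (an iso followed by a mono) and $F_e(g_{i+1})$ is epi, one computes
\[
   \ker d_{i-1} \;=\; \ker F_e(g_i) \;=\; \im F_e(h_{i}) \;=\; \im d_i,
\]
the middle equality being the exactness of $0 \to F_e(\Omega_{i+1}) \to F_e(P_i) \to F_e(K_i) \to 0$ after identifying $F_e(\Omega_{i+1}) \cong F_e(K_{i+1})$ via $F_e(k_{i+1})^{-1}$. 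The analogous computation for $\varepsilon = F_e(g_0)\,F_e(k_0)$ shows $\varepsilon$ is surjective with $\ker \varepsilon = \ker F_e(g_0) = \im d_0$.

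For (c) and (d), which together give minimality, I would observe that the composition $g_{i+1}\,k_{i+1}\,h_i \colon P_{i+1} \to P_i$ factors through $h_i$, whose image is $\ker g_i \subseteq \rad P_i$ because $g_i$ is a projective cover. Thus $g_{i+1}\,k_{i+1}\,h_i$ is a radical morphism in $\add(1-e)R$, and \cref{lemma:equivalence} then guarantees that $d_i = F_e(g_{i+1}\,k_{i+1}\,h_i)$ is radical in $\proj \Gamma_e$. The same reasoning applied to $g_0\,k_0$ shows that $\ker \varepsilon$ is the image under $F_e$ of a submodule of $\rad P_0$, hence lies in the radical of $F_e(P_0)$, so $\varepsilon$ is a projective cover of $F_e(\Omega_0)$.

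I do not anticipate any genuinely hard step; the construction has been engineered so that every piece fits together. The only point requiring mild care is confirming that $F_e$ sends radical submodules of objects in $\add(1-e)R$ into radicals of the corresponding $\Gamma_e$-projectives, but this is exactly the content of the radical-preserving statement in \cref{lemma:equivalence}, combined with the classical identity $\rad \Gamma_e = (1-e)\rad R(1-e)$ that underlies it for semiperfect $R$.
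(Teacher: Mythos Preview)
Your proof is correct and follows the same approach as the paper. One small simplification available: for showing $\varepsilon$ has radical kernel the paper just uses $\ker \varepsilon = \im d_0$ (by exactness), which is already contained in $\rad F_e(P_0)$ because $d_0$ is a radical morphism between projectives, so no separate verification that $F_e$ sends radical submodules into radicals is needed.
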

   \begin{proof}
      Notice, the considered complex $(C_\ast,d_\ast,\varepsilon)$ is the image of the dashed complex shown in \Cref{fig:infinite_tree} under $F_e$.
      As $X_i \in \mod R/\langle 1-e\rangle = \ker F_e$ and because $F_e$ is exact, the morphism $F_e(k_i)$ is an isomorphism and $F_e(g_i)$ is epic with kernel $F_e(h_i)$ for $i \in \NN$.
      We have $K_i \in {}^\perp\mod R/\langle 1-e\rangle$ by \Cref{lemma:r1-eenvelope} and this implies $P_i \in \add(1-e)R$ for $i \in \NN$ by \Cref{remark:projcover}.
      Hence, $F_e(P_i)$ is in $\proj \Gamma_e$ by \Cref{lemma:equivalence}.
      This shows that $(C_\ast,d_\ast,\varepsilon)$ is an augmented projective resolution of $F_e(\Omega_0)$.
      As $F_e$ preserves radical maps between projective modules in $\add (1-e)R$ by \Cref{lemma:equivalence} all differentials of $(C_\ast,d_\ast)$ are radical and hence have radical images.
      Hence, by exactness, $\varepsilon$ has radical kernel and therefore $(C_\ast,d_\ast,\varepsilon)$ is minimal.
   \end{proof}
   
   \Cref{cons:the_construction} and \Cref{thm:the_construction_works} can be extended to abelian categories fulfilling additional assumptions.
   For example, since injective envelopes exist over any ring, a similar result for injective resolutions over arbitrary rings $R$ can be obtained by appropriately modifying the proof given, at the expense of further technicalities. 
   
   Let $G_e := - \otimes_{\Gamma_e} (1-e)R \colon \mod \Gamma_e \to \mod R$. 
   This functor is right exact but in general not exact. 
   However, we can easily calculate the homology of its left derived functor using \Cref{cons:the_construction}.

   \begin{proposition} \label{prop:homology}
      Let $\Omega_0 \in \mod R$.
      Then $\Tor_i^{\Gamma_e}(F_e(\Omega_0), (1-e)R) \cong X_{i+1}$ for $i \geq 1$.
   \end{proposition}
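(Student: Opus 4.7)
The plan is to compute $\Tor$ directly from the minimal projective resolution of $F_e(\Omega_0)$ given by \cref{thm:the_construction_works} and then to identify the resulting complex with one living in $\mod R$. By \cref{thm:the_construction_works}, the complex $C_\bullet$ with $C_i := F_e(P_i)$ and differentials $d_i := F_e(g_{i+1}k_{i+1}h_i) \colon C_{i+1} \to C_i$ is a projective resolution of $F_e(\Omega_0)$ in $\mod \Gamma_e$, so $\Tor_i^{\Gamma_e}(F_e(\Omega_0),(1-e)R)$ is the $i$-th homology of the complex $G_e(C_\bullet)$.

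First I would invoke \cref{lemma:equivalence}, which guarantees that $G_eF_e$ is naturally isomorphic to the identity on $\add(1-e)R$. Since each $P_i$ lies in $\add(1-e)R$ by \cref{cons:the_construction} (see \cref{remark:projcover}), this natural isomorphism identifies $G_e(C_\bullet)$ with the complex $(P_\bullet,\tilde d_\bullet)$ in $\mod R$, where $\tilde d_i := g_{i+1}k_{i+1}h_i \colon P_{i+1}\to P_i$ is the dashed arrow of \cref{fig:infinite_tree}. Consequently, for $i \geq 1$, we have $\Tor_i^{\Gamma_e}(F_e(\Omega_0),(1-e)R) \cong \ker \tilde d_{i-1}/\im \tilde d_i$, realised as a subquotient of $P_i$.

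It then remains to compute $\ker \tilde d_{i-1}/\im \tilde d_i$ using the two families of short exact sequences $0 \to \Omega_{j+1} \xrightarrow{h_j} P_j \xrightarrow{g_j} K_j \to 0$ and $0 \to K_j \xrightarrow{k_j} \Omega_j \xrightarrow{f_j} X_j \to 0$ from \cref{cons:the_construction}. Since $k_i$ and $h_{i-1}$ are monic, one obtains $\ker \tilde d_{i-1} = \ker(g_ik_ih_{i-1}) = \ker g_i = \im h_i$. Since $g_{i+1}$ is epic, $\im \tilde d_i = \im(g_{i+1}k_{i+1}h_i) = h_i(\im k_{i+1})$. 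Using that $h_i$ is a monomorphism, the quotient is identified with $\Omega_{i+1}/\im k_{i+1}$, which in turn is isomorphic to $X_{i+1}$ via $f_{i+1}$ by the second short exact sequence above.

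I do not anticipate any serious obstacle in carrying this out: once the natural isomorphism $G_eF_e|_{\add(1-e)R} \cong \Id$ is in hand, everything reduces to a direct kernel-and-image chase in the tree of \cref{cons:the_construction}. The only delicate point is the naturality of this isomorphism, which must be used to ensure that the identification $(G_eF_e(P_\bullet), G_eF_e(\tilde d_\bullet)) \cong (P_\bullet,\tilde d_\bullet)$ is an isomorphism of chain complexes, i.e.\ compatible with the differentials, and not merely of graded modules.
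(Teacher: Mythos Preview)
Your proposal is correct and follows essentially the same route as the paper: apply $G_e$ to the minimal projective resolution from \cref{thm:the_construction_works}, use the natural isomorphism $G_eF_e|_{\add(1-e)R}\cong\Id$ to pass to the dashed complex $(P_\bullet,\tilde d_\bullet)$, and then compute the homology via the kernel/image chase using that $k_i,h_{i-1}$ are monic and $g_{i+1}$ is epic. Your write-up is in fact slightly more explicit than the paper's (you track the identification through $h_i$ rather than writing $\ker g_i=\Omega_{i+1}$), and your remark on naturality is the right caveat.
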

   \begin{proof}
      To calculate $\Tor_i^{\Gamma_e}(F_e(\Omega_0), (1-e)R)$ in degree $i \geq 1$ we have to apply $G_e$ term wise to a projective resolution of $F_e(\Omega_0)$ and calculate the degree $i$ homology of the resulting complex. 
      By \Cref{thm:the_construction_works} and because $G_e F_e$ is naturally isomorphic to the identity on $\add (1-e)R$ this homology is just the homology of the dashed complex in \Cref{fig:infinite_tree}.
      As $k_i$ and $h_{i-1}$ are injective we have $\ker(h_{i-1} k_i g_i) = \ker(g_i) = \Omega_{i+1}$ for $i \geq 1$.
      Moreover, as $g_{i+1}$ is surjective for $i \geq 1$ we obtain $\Tor_i^{\Gamma_e}(F_e(\Omega_0), (1-e)R) \cong \Omega_{i+1}/k_{i+1}(K_{i+1}) \cong X_{i+1}$.
   \end{proof}

   \Cref{prop:homology} is helpful to get a termination condition for \Cref{cons:the_construction} when one has some knowledge about projective dimensions $\Gamma_e$-modules.
   For example if one knows that $\Gamma_e$ is of global dimension $n \in \NN$, then in \Cref{cons:the_construction} only the first $n+1$ iterations need to be done.

   \section{Applications} \label{sec:applications}
   We want to use \Cref{cons:the_construction} to prove the following conjecture by Ingalls and Paquette for semiperfect noetherian rings (a proof is given on \cpageref{proof:ingalls}):
    
   \begin{conjecture}[{\cite[Conjecture 4.13]{ingalls_homological_2015}}] \label{conj:ingalls}
     Let $R$ be a noetherian $\kk$-algebra which is either semiperfect or positively graded. 
     Assume that $e\in R$ is a primitive idempotent (and of degree zero if $R$ is positively graded) with $\Ext_R^1(S_e,S_e) = 0$.
     If both $\pdim_R S_e$ and $\pdim_{\Gamma_e} eR(1-e)$ are finite, then $S_e$ is self-orthogonal.
   \end{conjecture}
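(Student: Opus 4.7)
The plan is to deduce the conjecture (in the semiperfect case, which is what this paper handles) from the more general \cref{introcorollary:generalization} by verifying its hypotheses. After the Morita reduction of \cref{sec:preliminaries} I may assume $R$ is basic. The hypothesis $\Ext^1_R(S_e,S_e) = 0$ then forces $R/\langle 1-e\rangle$ to be a semisimple ring by the criterion recalled at the end of \cref{sec:preliminaries} (the Serre-subcategory identification $\Ext^1_R(S_e,S_e) \cong \Ext^1_{R/\langle 1-e\rangle}(S_e,S_e)$ together with the fact that $S_e = (R/\langle 1-e\rangle)/\rad(R/\langle 1-e\rangle)$ as an $R/\langle 1-e\rangle$-module). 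In particular $R/\langle 1-e\rangle$ is right artinian, which is the ring-theoretic hypothesis of \cref{introcorollary:generalization}.

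The two projective-dimension hypotheses of the conjecture translate at once into the cofinite-vanishing hypotheses of \cref{introcorollary:generalization}: from $\pdim_R S_e<\infty$ one reads off $\Ext^k_R(S_e,S_e) = 0$ for every $k>\pdim_R S_e$, and from $\pdim_{\Gamma_e} eR(1-e)<\infty$ one obtains $\Tor^{\Gamma_e}_k(eR(1-e),M) = 0$ for every right $\Gamma_e$-module $M$ and every $k>\pdim_{\Gamma_e} eR(1-e)$, which specialises to $M = (1-e)Re$. Invoking \cref{introcorollary:generalization} yields $\Ext^k_R(S_e,S_e) = 0$ for all $k\geq 1$, i.e.\ $S_e$ is self-orthogonal.

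I expect the genuine obstacle to sit inside \cref{introcorollary:generalization} itself (whose proof is not contained in this excerpt), which is where the machinery of \cref{sec:the_construction} earns its keep. The natural line of attack would be to feed $\Omega_0 := eR$ into \cref{cons:the_construction}; by \cref{prop:homology}, the modules $X_i$ of the resulting tree compute $\Tor^{\Gamma_e}_{i-1}(eR(1-e),(1-e)R)$ for $i\geq 2$, and splitting $(1-e)R = (1-e)Re \oplus \Gamma_e$ as left $\Gamma_e$-modules annihilates the $\Gamma_e$-summand in positive Tor-degree, so $X_i \cong \Tor^{\Gamma_e}_{i-1}(eR(1-e),(1-e)Re)$ for $i\geq 2$. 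Cofinite vanishing of this Tor then forces cofinite vanishing of the $X_i$, whose $S_e$-multiplicities are exactly the dimensions $\dim_\kk \Ext^k_R(S_e,S_e)$ (read off through \cref{remark:determine} and \cref{remark:projcover} together with the identification of the simples of $R/\langle 1-e\rangle$ for a primitive $e$). The delicate final step is to rule out an isolated non-vanishing $\Ext^N_R(S_e,S_e)$ wedged between two vanishing neighbourhoods; this is where I expect the work to concentrate, presumably via a strong-no-loops-style propagation argument that exploits primitivity of $e$.
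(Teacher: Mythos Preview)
Your reduction to \cref{corollary:generalization} is correct and is precisely the paper's own argument: from $\Ext^1_R(S_e,S_e)=0$ conclude that $R/\langle 1-e\rangle$ is semisimple (hence right artinian), translate the two finite-projective-dimension hypotheses into cofinite vanishing of $\Ext_R^k(S_e,S_e)$ and $\Tor_k^{\Gamma_e}(eR(1-e),(1-e)Re)$, and invoke the corollary.

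Your speculation about how \cref{corollary:generalization} is proved is right in feeding $\Omega_0=eR$ into \cref{cons:the_construction} and using \cref{prop:homology} (plus the splitting $(1-e)R\cong\Gamma_e\oplus(1-e)Re$) to obtain a maximal $i$ with $X_i\neq 0$, but the endgame in the paper is different from what you sketch. There is no identification of $S_e$-multiplicities in $X_i$ with $\dim_\kk\Ext_R^k(S_e,S_e)$, and no strong-no-loops style propagation is invoked. Instead the paper introduces the $m$-property (\cref{def:m-property}): for $e$ primitive one takes $S=S_e$ as $m$-dual of $X_i$ and sets $m:=m(X_i,S_e)=\sup\{k:\Ext_R^k(X_i,S_e)\neq 0\}$; applying $\Hom_R(-,S_e)$ to the rows and columns of \cref{fig:infinite_tree} and using $P_k\in\add(1-e)R$, $X_j\in\mod R/\langle 1-e\rangle$, and maximality of $i$ produces a chain of isomorphisms $\Ext_R^m(X_i,S_e)\cong\Ext_R^{m+i}(eR,S_e)$. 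Projectivity of $eR$ then forces $m=i=0$, whence $m(S_e,S_e)\le 0$.
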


   But we also want to extend this conjecture to non-primitive idempotents to prove \Cref{thm:sandwiched_subrings}. 
   For convenience we introduce the following notation:
   Assume that $M$ and $M'$ are $R$-modules.
   Then we denote by 
   \begin{equation} m(M,M') := \sup \set{k \in \NN}{\Ext_R^k(M,M') \neq 0} \label{eqn:m} \end{equation}
   the supremum of all degrees in which $M$ has extensions by $M'$, where the supremum is taken in $\NN \cup \{\pm \infty \}$.
   In particular $m(M,M') = -\infty$ means that $M$ has no extensions by $M'$ and no morphisms to $M'$ and $m(M,M') = \infty$ means that $M$ has extensions by $M'$ in infinitely many degrees.
   We observe that $m(M,M')$, for simple modules $M$ and $M'$, cannot be $0$ unless $M \cong M'$.

   We introduce the following property, which is a slight generalization of the Yoneda ring of $M$ being of uniformly graded Loewy length, as for example considered in \cite{bravo_idempotent_2019}:

   \begin{definition} \label{def:m-property}
      Let $\cX \subset \mod R$ be a subcategory and $m$ as in \cref{eqn:m}.
      \begin{enumerate}
         \item For $X \in \cX$ we say that $X' \in \cX$ is an \emph{$m$-dual} of $X$ with respect to $\cX$ if $X'$ is non-zero and the inequality $m(X,X') \geq m(X'', X')$ holds for all $X'' \in \cX$.
         \item We say that $\cX$ has the \emph{$m$-property} if the restriction $m(\left.-\right\vert_\cX,\left.-\right\vert_\cX)$ is a bounded above function and every non-zero $X \in \cX$ has an $m$-dual with respect to $\cX$.
      \end{enumerate} 
      Similarly, we say $M \in \mod R$ has the \emph{$m$-property} if $\add M$ does so. 
   \end{definition}
   
   It is not difficult to construct a finite dimensional $\kk$-algebra $R$ and a semisimple module $S_e$ having the $m$-property such that $Y_e = \Ext^\ast_R(S_e, S_e)$ is not of uniform graded Loewy length.
   But whether the $m$-property is of any use, except for the proof of \Cref{thm:sandwiched_subrings}, has yet to be determined.

   \begin{remark} \label{remark:mproperty}
   Let $M := M_1 \oplus \dots \oplus M_n$ be a direct sum of indecomposable $R$-modules with local endomorphism ring.
   Then $\add M$ is Krull--Schmidt by \cite[Theorem 12.6]{anderson_rings_1992} and to check that $M$ has the $m$-property we first need to verify that $M$ has self-extensions in only finitely many degrees.
   If this is the case then $M$ having the $m$-property is equivalent to the statement that in each row of \Cref{tab:m-property},
   \begin{table}[ht]
      \caption{The values of $m(-,-)$ on the summands of $M_1 \oplus \dots \oplus M_n$}
      \begin{tabular}{c|cccc}
         $m(-,-)$ & $M_{1}$           & $M_{2}$           & $\dots$  & $M_{n}$           \\
         \hline
         $M_{1}$  & $m(M_{1}, M_{1})$ & $m(M_{1}, M_{2})$ & $\dots$  & $m(M_{1}, M_{n})$ \\
         $M_{2}$  & $m(M_{2}, M_{1})$ & $m(M_{2}, M_{2})$ & $\dots$  & $m(M_{2}, M_{n})$ \\
         $\vdots$ & $\vdots$          & $\vdots$          & $\ddots$ & $\vdots$          \\
         $M_{n}$  & $m(M_{n}, M_{1})$ & $m(M_{n}, M_{2})$ & $\dots$  & $m(M_{n}, M_{n})$
      \end{tabular}
      \label{tab:m-property}
   \end{table}
   there is a value which is maximal within its column.
   \end{remark}

   Notice that \cite[Lemma 2.2]{bravo_idempotent_2019} shows that $S_e$ is an $m$-dual to each non-zero $M \in \add S_e$ with respect to $\mod R/\langle 1-e\rangle$ if $R/\langle 1-e \rangle$ is artinian and $Y_e$ is of uniformly graded Loewy length.

   \begin{lemma} \label{lem:extend_m-property}
   Suppose $S_e$ has the $m$-property and that $R/\langle 1-e\rangle$ is right artinian.
   Then 
   \begin{enumerate}
      \item every $X \in \mod R/\langle 1-e\rangle$ has an $m$-dual in $\add S_e \subset \mod R/\langle 1-e\rangle$ with respect to $\mod R/\langle 1-e\rangle$ and \label{enum:lemma1}
      \item if $Y_e$ is of uniformly graded Loewy length then the semisimple module $S_e$ is an $m$-dual to each $X \in \mod R/\langle 1-e\rangle$ with respect to $\mod R/\langle 1-e\rangle$. \label{enum:lemma2}
   \end{enumerate}
   Moreover, $\mod R/\langle 1-e\rangle$ has the $m$-property.
   \end{lemma}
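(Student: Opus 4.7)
The plan is to exploit that $R/\langle 1-e\rangle$ being right artinian forces every $X \in \mod R/\langle 1-e\rangle$ to have finite length, with composition factors among the indecomposable summands $T_1,\dots,T_n$ of $S_e$. Together with the $m$-property on $\add S_e$, this will let me bound $m(X'', X')$ globally on $\mod R/\langle 1-e\rangle$ and build an explicit $m$-dual inside $\add S_e$.

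First I would establish the general upper bound: iterating the long exact sequence of $\Ext$ along a composition series of $X''$ yields $m(X'', X') \leq \max_{j \in I(X'')} m(T_j, X')$, where $I(X'')$ indexes the composition factors of $X''$. Applying the analogous argument along a composition series of $X'$ then bounds $m$ on $\mod R/\langle 1-e\rangle \times \mod R/\langle 1-e\rangle$ by $\sup m|_{\add S_e \times \add S_e}$, which is finite by the hypothesis that $e$ has the $m$-property.

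For part \cref{enum:lemma1}, given a non-zero $X$, its socle is non-zero (as $X$ has finite length), so some indecomposable summand $T_i$ of $S_e$ embeds into $X$; let $D_i \in \add S_e$ be an $m$-dual of $T_i$ with respect to $\add S_e$ and set $k_0 := m(T_i, D_i)$. The $m$-dual property gives $\max_j m(T_j, D_i) = k_0$, so the upper bound above yields $m(X'', D_i) \leq k_0$ for every $X'' \in \mod R/\langle 1-e\rangle$. For the matching lower bound, apply the long exact sequence to $0 \to T_i \to X \to Q \to 0$: since $m(Q, D_i) \leq k_0$, we have $\Ext^{k_0+1}(Q, D_i) = 0$, so $\Ext^{k_0}(X, D_i) \twoheadrightarrow \Ext^{k_0}(T_i, D_i) \neq 0$, hence $m(X, D_i) \geq k_0$. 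Thus $D_i$ is the required $m$-dual. Part \cref{enum:lemma2} follows by the same argument with $D_i$ replaced by $S_e$, invoking the preceding remark to supply $S_e$ as an $m$-dual of $T_i$ with respect to $\mod R/\langle 1-e\rangle$ itself under the Loewy-length hypothesis, so that the relevant bound on $m(Q, S_e)$ is immediate. The \emph{moreover} assertion is then immediate from part \cref{enum:lemma1} together with the global upper bound. The delicate step is securing $\Ext^{k_0+1}(Q, D_i) = 0$, which relies not on $D_i$ being merely some $m$-dual but specifically on $k_0$ being the column maximum of $m(-, D_i)$ over all of $\add S_e$.
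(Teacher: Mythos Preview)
Your proposal is correct and follows essentially the same route as the paper's proof. The only cosmetic differences are that the paper takes the $m$-dual of the full socle $\soc(X)$ rather than of a single simple summand $T_i$, and it separates the two composition-series bounds (first variable for \cref{eqn:Sedominates}, second variable for the ``moreover''), whereas you front-load both; the key mechanism---using the $m$-dual property to force the column maximum and then pushing $\Ext^{k_0}$ through the socle inclusion via the long exact sequence---is identical.
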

   \begin{proof}
      If $R/\langle 1-e\rangle$ is right artinian then each object of $\mod R/\langle 1-e\rangle$ has a finite composition series (in $\mod R$) with composition factors in $\add S_e$.
      
      First we show that for all $X', X'' \in \mod R /\langle 1-e\rangle$ the inequality
      \begin{equation}
         m(X'', X') \leq m(S_e, X') \label{eqn:Sedominates}
      \end{equation}
      holds.
      Let $X' \in \mod R/\langle 1-e\rangle$ and define $m := m(S_e, X')$.
      If $m = \infty$ the claim clearly holds, so we may assume $m < \infty$.
      If $0 = X_0'' \subset X_1'' \subset \dots \subset X_{i}'' = X''$ is a composition series of $X'' \in \mod R/\langle 1-e\rangle$ then the exact sequences
      \includepdf{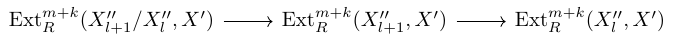}
      for $k \in \NN$ and $l \in \{0,\dots,i-1\}$ show $m(X_{l+1}'',X') \leq m$ inductively for $l = 0, \dots, i-1$ since $X_{l+1}'' / X_{l}'' \in \add S_e$ for $l = 0, \dots, i-1$.
      Hence, \cref{eqn:Sedominates} holds.

      Now let $X \in \mod R/\langle 1-e \rangle$ have socle $\soc(X)$ with $m$-dual $S$ with respect to $\add S_e$ and $m := m(\soc(X), S)$.
      We have $m < \infty$ because $\soc(X), S \in \add S_e$ and $S_e$ has the $m$-property.
      Then $m(S_e, S) \leq m$ because $S$ is an $m$-dual of $\soc(X)$.
      Further, $m(X'', S) \leq m$ for $X'' \in \mod R/\langle 1-e\rangle$ by \cref{eqn:Sedominates}.
      Therefore, the right term of the exact sequence 
      \includepdf{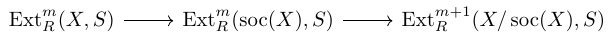}
      vanishes, so $m \leq m(X,S)$. 
      But then $m(X'', S) \leq m \leq m(X,S)$ for $X'' \in \mod R /\langle 1-e\rangle$ implies that $S$ is an $m$-dual of $X$ with respect to $\mod R/\langle 1-e\rangle$.
      This shows (\ref{enum:lemma1}) and (\ref{enum:lemma2}) follows since we could have picked $S = S_e$ as $m$-dual of $\soc(X)$ with respect to $\add S_e$.
      
      It remains to show that the restriction of $m(-,-)$ to $\mod R/\langle 1-e\rangle \times \mod R/\langle 1-e\rangle$ is a bounded above function.
      Let $m := m(X'',S_e) \leq \infty$ for some $X'' \in \mod R/\langle 1-e\rangle$.
      Then the composition series $0 = X'_0 \subset X'_1 \subset \dots \subset X'_{i} = X'$ of an arbitrary $X' \in \mod R/\langle 1-e\rangle$ gives rise to exact sequences
      \includepdf{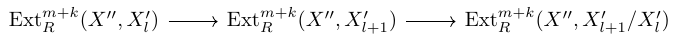}
      for $k \in \NN$ and $l \in \{0,\dots,i-1\}$, which imply $m(X'',X') \leq m$ by induction, similarly to the first part.
      In particular, $m(X'', X') \leq m(X'', S_e) \leq m(S_e, S_e) < \infty$, where we used \cref{eqn:Sedominates} for the second inequality, holds for $X',X'' \in \mod R/\langle 1-e\rangle$ and completes the proof that $\mod R /\langle 1-e\rangle$ has the $m$-property.
   \end{proof}

   \begin{theorem} \label{thm:generalization}
      Let $R$ be a semiperfect noetherian basic ring, $R/\langle 1-e\rangle$ be right artinian and $S_e$ have the $m$-property.
      If $\Tor_{k}^{\Gamma_e}(eR(1-e), (1-e)Re) = 0$ for all but finitely many $k \in \NN$ then
      \begin{enumerate} 
         \item there is a non-trivial $S \in \add S_e$ with $\Ext_R^k(S_e, S) = 0$ for $k > 0$ and\label{item:firstpart}
         \item if $Y_e$ is of uniformly graded Loewy length then (\ref{item:firstpart}) holds for $S = S_e$.\label{item:secondpart}
      \end{enumerate}
   \end{theorem}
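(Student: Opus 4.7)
The plan is to apply \cref{cons:the_construction} with $\Omega_0 := eR$ and combine the resulting stabilization with the $m$-property of $S_e$ to identify an $S \in \add(S_e)$ with the desired vanishing. Since $F_e(eR) = eR(1-e)$, \cref{prop:homology} gives $X_{i+1} \cong \Tor_i^{\Gamma_e}(eR(1-e), (1-e)R)$ for $i \geq 1$. Decomposing the right $\Gamma_e$-module $(1-e)R$ as $(1-e)Re \oplus \Gamma_e$ and using that $\Gamma_e$ is flat over itself, this reduces to $X_{i+1} \cong \Tor_i^{\Gamma_e}(eR(1-e), (1-e)Re)$ for $i \geq 1$. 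The Tor hypothesis then supplies some $N \in \NN$ with $X_j = 0$ for all $j \geq N$. For such $j$, $f_j = 0$ so $K_j = \Omega_j$, and $g_j \colon P_j \to \Omega_j$ is a projective cover; since $P_j \in \add(1-e)R$ always, $\Omega_N$ admits a minimal projective resolution entirely within $\add(1-e)R$. As every $S \in \add(S_e)$ satisfies $\Hom_R(P,S) = S(1-e) = 0$ for $P \in \add(1-e)R$, \cref{remark:determine} yields $\Ext^k_R(\Omega_N, S) = 0$ for all $k \geq 0$ and all $S \in \add(S_e)$.

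The candidate $S$ is then selected via the $m$-property: by \cref{lem:extend_m-property}(\ref{enum:lemma1}), $S_e$ admits an $m$-dual $S \in \add(S_e)$ with respect to $\mod R/\langle 1-e\rangle$, so setting $M := m(S_e, S)$ gives $\Ext^{>M}_R(X, S) = 0$ for every $X \in \mod R/\langle 1-e\rangle$, in particular for each $X_i$. For part~(\ref{item:secondpart}), \cref{lem:extend_m-property}(\ref{enum:lemma2}) permits the cleaner choice $S := S_e$ with $M = m(S_e, S_e)$.

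To conclude $\Ext^k_R(S_e, S) = 0$ for all $k > 0$, I would propagate the vanishing $\Ext^\ast_R(\Omega_N, S) = 0$ backwards through the tree of \cref{cons:the_construction} by applying $\Hom_R(-, S)$ to the two families of short exact sequences $0 \to \Omega_{i+1} \to P_i \to K_i \to 0$ and $0 \to K_i \to \Omega_i \to X_i \to 0$. Using $\Ext^\ast(P_i, S) = 0$ the first family yields $\Ext^k(K_i, S) \cong \Ext^{k-1}(\Omega_{i+1}, S)$ for $k \geq 1$ and $\Hom(K_i, S) = 0$; combining with the second family then relates $\Ext^\ast(\Omega_i, S)$ inductively to $\Ext^\ast(X_i, S)$ and $\Ext^\ast(\Omega_{i+1}, S)$ with a cohomological shift. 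The boundary condition $\Ext^\ast(\Omega_0, S) = \Ext^\ast(eR, S) = Se$ concentrated in degree $0$, together with the uniform $m$-dual bound $\Ext^{>M}(X_i, S) = 0$, strongly constrains the possible values. The translation from $eR$ to $S_e$ is via the syzygy sequence $0 \to \rad eR \to eR \to S_e \to 0$, which gives $\Ext^k_R(S_e, S) \cong \Ext^{k-1}_R(\rad eR, S)$ for $k \geq 2$; combining these pieces should force $M = 0$, which is exactly the desired vanishing, and the argument with $S := S_e$ under the uniformly-graded-Loewy-length hypothesis yields part~(\ref{item:secondpart}) simultaneously.

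The hard part will be the bookkeeping in this last step: tracking how the finitely many nonzero $X_i$ (for $0 \leq i < N$) feed into the iterated long exact sequences and extracting a genuine contradiction when $M > 0$. Phrased more invariantly, the dashed complex augmented at $eR$ is a complex of projective $R$-modules whose homology is concentrated in the $X_i$'s and whose hypercohomology with $S$ equals $Se$ in a single degree; the induced spectral sequence $E_2^{p,q} = \Ext^p(X_\bullet, S) \Rightarrow Se$ has support bounded by $p \leq M$ in the horizontal direction, and the key technical task is to show that compatibility with the one-dimensional target and the short exact sequence involving $\rad eR$ collapses this strip to $p = 0$, i.e.\ forces $M = 0$.
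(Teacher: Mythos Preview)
Your overall architecture matches the paper's: apply \cref{cons:the_construction} to $\Omega_0 = eR$, use the Tor hypothesis together with \cref{prop:homology} to obtain a maximal index $i$ with $X_i \neq 0$, and then chase $\Hom_R(-,S)$ through the two families of short exact sequences. The reductions in your first paragraph are correct, as is the observation that $\Ext_R^\ast(K_i,S) = 0$ once $X_j = 0$ for $j > i$.

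The genuine gap is your choice of $S$. You take $S$ to be an $m$-dual of $S_e$, so that $M = m(S_e,S) = \sup_{X''} m(X'',S)$. The paper instead takes $S$ to be an $m$-dual of $X_i$, the \emph{last} nonzero approximation object. This is not cosmetic. The chain of isomorphisms one obtains from the long exact sequences reads
\[
   \Ext_R^{M}(X_i,S) \;\cong\; \Ext_R^{M+i}(eR,S),
\]
valid for any $M$ with $M \geq m(X_j,S)$ for all $j$. The right-hand side vanishes whenever $M+i>0$, so a contradiction requires $\Ext_R^{M}(X_i,S) \neq 0$. With the paper's choice this is automatic, since $M = m(X_i,S)$ by construction; with your choice one only knows $m(X_i,S) \leq M$, so $\Ext_R^{M}(X_i,S)$ may well be zero and nothing is forced. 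Your proposed spectral sequence does not rescue this: its $E_2$-page is built from $\Ext_R^p(X_j,S)$, and the corner argument at best yields $\Ext_R^{M}(X_0,S) = 0$. But $S_e$ is in general only a proper quotient of $X_0 = eR/eR(1-e)R$, so vanishing of $\Ext_R^{M}(X_0,S)$ does not imply vanishing of $\Ext_R^{M}(S_e,S)$, and the detour through $\rad eR$ does not close this gap either.

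The fix is precisely the paper's move: let $S$ be an $m$-dual of $X_i$ with respect to $\mod R/\langle 1-e\rangle$ (available by \cref{lem:extend_m-property}(\ref{enum:lemma1})), set $m := m(X_i,S)$, and run the chain of isomorphisms; then $\Ext_R^m(X_i,S) \neq 0$ forces $m = i = 0$, whence $m(S_e,S) \leq m = 0$. Your treatment of part~(\ref{item:secondpart}) is fine, since under the uniformly graded Loewy length hypothesis \cref{lem:extend_m-property}(\ref{enum:lemma2}) makes $S_e$ an $m$-dual of every object, in particular of $X_i$, so the two choices coincide there.
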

   \begin{proof}
      Apply \Cref{cons:the_construction} to $\Omega_0 = eR$.
      We have $X_0 \neq 0$, because $eR$ is the projective cover of $S_e$.
      Notice, $0 = \Tor^{\Gamma_e}_k(eR(1-e), (1-e)Re) \cong \Tor^{\Gamma_e}_k(eR(1-e), (1-e)R)$ holds for all but finitely many $k \geq 1$ because $(1-e)R = \Gamma_e \oplus (1-e)Re$ as $\Gamma_e$-modules.
      Since $F_e(\Omega_0) = eR(1-e)$ there is a maximal $i \in \NN$ with $X_i \neq 0$ by \Cref{prop:homology}.

      Let $S$ be an $m$-dual for $X_i$ with respect to $\mod R/\langle 1-e\rangle$, where we choose $S := S_e$ if $Y_e$ is of uniformly graded Loewy length, and 
      \begin{equation} 
         m := m(X_i,S) = \sup \set{m(X', S)}{X' \in \mod R/\langle 1-e\rangle} \text{.} \label{equation:msup}
      \end{equation}
      We now apply $\Hom_R(-,S)$ to each row and column of \Cref{fig:infinite_tree} and look at the relevant diagrams.
      First we claim that the left and right term in the exact sequence
      \includepdf{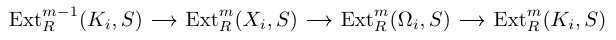}
      vanish.
      Indeed, since $i$ is maximal with $X_{i} \neq 0$ the minimal projective resolution of $K_i$ is given by $\cdots \to P_{i+2} \to P_{i+1} \to P_i \to K_i \to 0$.
      Hence, $\Ext_R^k(K_i, S) = 0$ for all $k \in \NN$ using \Cref{remark:determine} and $S \in \add S_e$ as well as $P_{k'} \in \add (1-e)R$ for $k' \in \NN$ by construction.
      So
      \begin{equation} \label{eqn:1stiso}
         \phi := \Ext_R^m(f_i, S) \colon \Ext_R^m(X_i, S) \to \Ext_R^m(\Omega_i, S)
      \end{equation} 
      is an isomorphism.
      If $i \geq 1$ the series of exact sequences
      \includepdf{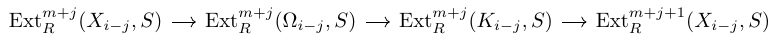}
      shows that there are isomorphisms 
      \begin{equation} \label{eqn:2ndiso}
         \kappa_j := \Ext_R^{m+j}(k_{i-j}, S) \colon \Ext_R^{m+j}(\Omega_{i-j}, S) \to \Ext_R^{m+j}(K_{i-j}, S)
      \end{equation} 
      for every $1 \leq j \leq i$, since $X_{i-j} \in \mod R/\langle 1-e\rangle$ and hence $\Ext_R^{m+k}(X_{i-j}, S) = 0$ for $k > 0$ by \cref{equation:msup}.
      Finally, if $i \geq 1$ the vertical short exact sequences induce exact sequences
      \includepdf{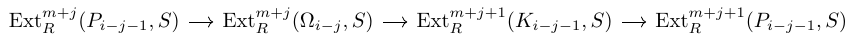}
      which give us isomorphisms 
      \begin{equation} \label{eqn:3rdiso}
         \partial_j \colon \Ext_R^{m+j}(\Omega_{i-j}, S) \to \Ext_R^{m+j+1}(K_{i-j-1}, S)
      \end{equation} 
      for $0 \leq j \leq i-1$ as $\Ext_R^k(P_{i}, S) = 0$ for $k \in \NN$ using \Cref{remark:determine} and $S \in \add S_e$ as well as $P_{i-j-1} \in \add(1-e)R$ by construction.
      
      Now, there is an isomorphism $\Ext_R^m(X_i, S) \to \Ext_R^{m}(\Omega_0,S)$ given by $\phi$ if $i=0$ and by the composite $\kappa_i^{-1} \partial_{i-1} \kappa_{i-1}^{-1} \cdots \partial_2 \kappa_{2}^{-1} \partial_1 \kappa_1^{-1} \partial_0 \phi$ of the isomorphisms from equations (\ref{eqn:1stiso}), (\ref{eqn:2ndiso}) and (\ref{eqn:3rdiso}) if $i \geq 1$. 
      Therefore, $m=i=0$ as $\Omega_0 = eR$ is projective.
      In particular, this shows $m(S_e, S) = m = 0$, which proves the theorem.
   \end{proof}
   The condition that $R/\langle 1-e\rangle$ is right artinian cannot be dropped easily as the ring of formal power series $R:= \kk\llbracket X\rrbracket$ shows.
Indeed, for $e = 1$ the simple $R$-module $\kk = S_{e}$ satisfies the assumptions of \Cref{thm:generalization} except for $R/\langle 0 \rangle$ being artinian, but ${\Ext_R^1(\kk, \kk) \neq0}$.

   \begin{corollary} \label{corollary:generalization}
      Let $R$ be a semiperfect noetherian basic ring, $e$ be a primitive idempotent and $R/\langle 1-e\rangle$ be artinian.
      If $\Tor_k^{\Gamma_e}(eR(1-e), (1-e)Re)$ and $\Ext_{R}^k(S_e,S_e)$ vanish for all but finitely many $k \in \NN$ then $S_e$ is self-orthogonal.
   \end{corollary}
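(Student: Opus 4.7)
The plan is to deduce \cref{corollary:generalization} directly from \cref{thm:generalization}. The hypotheses of that theorem that overlap with those of the corollary, namely that $R$ is semiperfect noetherian basic, that $R/\langle 1-e\rangle$ is right artinian, and that $\Tor_k^{\Gamma_e}(eR(1-e), (1-e)Re) = 0$ for all but finitely many $k$, are all given. The only genuine verification left is that $S_e$ has the $m$-property.

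For that, I would use primitivity of $e$. Since $R$ is basic and $e$ is primitive, $S_e$ is a simple module, so $\add S_e$ is the subcategory of finite direct sums $S_e^{\oplus n}$ with $n \in \NN$. For any two non-zero objects $M = S_e^{\oplus a}$ and $M' = S_e^{\oplus b}$ of $\add S_e$ we have $m(M,M') = m(S_e,S_e)$, and this common value is finite by the assumption $\Ext_R^k(S_e,S_e) = 0$ for all but finitely many $k$. Therefore $m(-,-)$ is bounded above on $\add S_e$, and $S_e$ itself is an $m$-dual to every non-zero object of $\add S_e$ (any zero objects being handled trivially). Hence $S_e$ has the $m$-property in the sense of \cref{def:m-property}.

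With the hypotheses of \cref{thm:generalization} verified, part (\ref{item:firstpart}) of that theorem yields a non-zero $S \in \add S_e$ with $\Ext_R^k(S_e, S) = 0$ for all $k > 0$. Writing $S = S_e^{\oplus n}$ with $n \geq 1$, this gives $\Ext_R^k(S_e, S_e)^{\oplus n} = 0$ for $k > 0$, from which $\Ext_R^k(S_e,S_e) = 0$ for all $k > 0$, i.e.\ $S_e$ is self-orthogonal. There is no substantive obstacle here; the whole point is that primitivity of $e$ forces the indecomposable summand singled out by \cref{thm:generalization}(\ref{item:firstpart}) to be $S_e$ itself. As an alternative one could appeal to \cref{thm:generalization}(\ref{item:secondpart}): since $\End_R(S_e)$ is a division ring and $Y_e = \Ext_R^\ast(S_e,S_e)$ is concentrated in finitely many positive degrees, $Y_e$ is a finite-dimensional graded local ring and in particular has uniformly graded Loewy length, so (\ref{item:secondpart}) directly produces the required self-orthogonality with $S = S_e$.
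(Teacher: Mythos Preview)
Your proposal is correct and follows essentially the same approach as the paper: verify that $S_e$ has the $m$-property using that $e$ is primitive (so $S_e$ is simple and trivially its own $m$-dual, with boundedness coming from the finiteness assumption on $\Ext_R^k(S_e,S_e)$), and then apply \cref{thm:generalization}(\ref{item:firstpart}), noting that any non-trivial $S\in\add S_e$ is a power of $S_e$. Your alternative via (\ref{item:secondpart}) is not needed and is slightly imprecise (there is no ambient field here, so ``finite-dimensional'' would have to mean over the division ring $\End_R(S_e)$, and finiteness of each $\Ext^k$ as such a module is not part of the hypotheses), but this does not affect the main argument.
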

   \begin{proof}
      $S_e$ has the $m$-property because $\Ext_R^{\smash{k}}(S_e,S_e)$ is non-zero for only finitely many $k \in \NN$ and because $S_e$ is simple and hence trivially an $m$-dual to itself.
      The result now follows from \Cref{thm:generalization}(\ref{item:firstpart}).
   \end{proof}

   We can give a proof of \Cref{conj:ingalls} for semiperfect noetherian rings now:
   \begin{proof}[Proof of \Cref{conj:ingalls}] \label{proof:ingalls}
      We have $\Ext_{\smash{R/\langle 1-e\rangle}}^1(S_e,S_e) = 0$ because $\mod R/\langle 1-e\rangle$ is a Serre subcategory of $\mod R$ by \Cref{lemma:serre} and because $\Ext^1_R(S_e,S_e) = 0$.
      This combined with $(R/\langle 1-e \rangle)/(\rad R/\langle 1-e \rangle) \cong S_e$ as $R/\langle 1-e\rangle$-modules shows that $R/\langle 1-e\rangle$ is a semisimple ring.
      In particular $R/\langle 1-e\rangle$ is a right artinian ring.
      As $\pdim_R S_e$ is finite, $\Ext^k_R(S_e,S_e)$ can be non-zero for only finitely many $k \in \NN$.
      Further $\pdim_{\Gamma_e} eR(1-e)$ being finite implies that $eR(1-e)$ has finite flat dimension as a $\Gamma_e$-module.
      In particular, $\Tor^{\smash{\Gamma_e}}_k(eR(1-e), (1-e)Re)$ is zero for all but finitely many $k \in \NN$.
      But then the result follows from \Cref{corollary:generalization}.
   \end{proof}

   Next, we want to make use of \Cref{thm:generalization} and therefore introduce the following:
   \begin{definition}
      Let $M \in \mod R$. 
      Then the \emph{$\Ext^{>0}$-quiver} $Q := (Q_0,Q_1)$ of $M$ has 
      \begin{enumerate}
         \item vertices $Q_0 := \{\,\text{(isomorphism classes of) indecomposable summands of } M\,\}$ and
         \item arrows $Q_1 := \set{M_1 \to M_2}{M_1,M_2 \in Q_0 \text{ and } m(M_1,M_2) > 0}$.
      \end{enumerate}
   \end{definition}

   Notice, this quiver is not weighted.
   There exists \emph{precisely one} arrow from $M_1$ to $M_2$ in $Q$ if $\Ext_R^{k}(M_1, M_2)$ does not vanish for some $k \geq 1$.
   Further, if $M'$ is a direct summand of $M$ then the $\Ext^{\smash{>0}}$-quiver of $M'$ is the full subquiver of the $\Ext^{\smash{>0}}$-quiver of $M$ having indecomposable summands of $M'$ as vertices.
   This is because the existence of an arrow between vertices of an $\Ext^{>0}$-quiver is solely determined by the homological relation of these vertices to each other.

   Recall that a vertex of a quiver is called \emph{source vertex} if there are no arrows ending in it.
   Dually a vertex is called \emph{sink vertex} if there is no arrow starting in it.
   \begin{remark} \label{remark:label}
      The vertices $Q_0$ of a quiver $Q = (Q_0, Q_1)$ admit a labeling $\{q_1, q_2, \dots, q_n\} = Q_0$ such that there is no arrow $q_i \to q_j$ for $i \leq j$ if and only if $Q$ does not contain an oriented cycle (here a loop is seen as an oriented cycle of length $1$).
   \end{remark}

   We also introduce the following intuitive notation:
   For an idempotent $e' \in R$ we call the idempotent subring $\Gamma_{e'}$ \emph{sandwiched} between $\Gamma_e$ and $R$ if $\Gamma_e \subset \Gamma_{e'} \subset R$.
   This is equivalent to $e'e = e e' = e'$.

   \begin{theorem} \label{thm:sandwiched_subrings}
      Let $R$ be a semiperfect noetherian basic ring and $R/\langle 1-e\rangle$ be right artinian.
      Suppose further that all idempotent subrings $\Gamma$ sandwiched between $\Gamma_e$ and $R$ have finite global dimension.
      Then there is a decomposition $S_e = S_1 \oplus \dots \oplus S_n$ into simple summands such that $\Ext_R^k(S_i, S_j) = 0$ for $1 \leq j \leq i \leq n$ and $k > 0$.
   \end{theorem}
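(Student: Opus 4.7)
The plan is to argue by strong induction on the number $n$ of primitive summands of $e$, constructing the ordering by repeatedly peeling off a \emph{source} of the $\Ext^{>0}$-quiver of $S_e$. The case $n = 0$ is vacuous; for $n = 1$ the sandwich hypothesis applied to the idempotent $e' = 0 \leq e$ supplies $\gl R < \infty$ (hence $\Ext^k_R(S_e, S_e) = 0$ for $k > \gl R$), while $\gl \Gamma_e < \infty$ supplies the required $\Tor$-vanishing, so \cref{corollary:generalization} gives the self-orthogonality of $S_e = S_1$.

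For the inductive step with $n \geq 2$, the task reduces to producing a simple summand $S_1$ of $S_e$ with $\Ext^k_R(S_e, S_1) = 0$ for all $k > 0$: given such $S_1$ with corresponding primitive idempotent $e_1$, put $\hat e := e - e_1$. Since $(1-\hat e)(1-e) = 1 - e$, one has $\langle 1-e\rangle \subseteq \langle 1-\hat e\rangle$, so $R/\langle 1-\hat e\rangle$ remains right artinian as a quotient of $R/\langle 1-e\rangle$; and every idempotent dominated by $\hat e$ is also dominated by $e$, so the sandwich hypothesis passes to $\hat e$. The induction hypothesis applied to $\hat e$ (with $n-1$ primitive summands) then supplies an ordering $S_2, \ldots, S_n$ with $\Ext^k_R(S_i, S_j) = 0$ for $2 \leq j \leq i \leq n$, and the source property of $S_1$ handles the column $j = 1$.

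To produce $S_1$, I would argue by contradiction. If the $\Ext^{>0}$-quiver of $S_e$ has no source, following incoming edges backwards in a finite quiver yields a simple cycle of some length $k \in \{1, \ldots, n\}$. A loop (the case $k = 1$) at some $T_i$ contradicts \cref{corollary:generalization} applied to the primitive $e_i$ (whose hypotheses transfer exactly as in the $n = 1$ case). For $1 < k < n$, the cycle lies inside the $\Ext^{>0}$-quiver of $S_{e'}$ for $e' := \sum_{\ell = 1}^{k} e_{i_\ell} \lneq e$; the hypotheses propagate to $e'$ by the same observations, and the inductive hypothesis then forces that subquiver to be acyclic, contradicting the presence of the cycle.

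The main obstacle is the remaining case $k = n$, a Hamilton cycle $T_1 \to T_2 \to \cdots \to T_n \to T_1$ after relabeling, where strong induction alone is insufficient. First I would exploit minimality of the cycle to exclude further edges: any purported $T_i \to T_j$ with $j \not\equiv i + 1 \pmod{n}$ would close with the Hamilton arc from $T_j$ back to $T_i$ into a cycle of length $1 + ((i-j) \bmod n) < n$. Consequently the quiver is exactly the Hamilton cycle, and in the matrix $[m(T_i, T_j)]$ of \cref{remark:mproperty} the diagonal is zero (by the $n = 1$ case applied to each $e_i$), the entries $a_i := m(T_i, T_{i+1})$ (indices modulo $n$) are positive, and all other entries are $-\infty$; each column therefore has its unique positive entry in a single row, so $T_{i+1}$ serves as an $m$-dual of $T_i$, and $S_e$ has the $m$-property, with boundedness guaranteed by $\gl R < \infty$. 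Now \cref{thm:generalization} applies --- $R/\langle 1-e\rangle$ is right artinian and $\Tor^{\Gamma_e}_k(eR(1-e), (1-e)Re)$ vanishes for large $k$ because $\gl \Gamma_e < \infty$ --- and produces a non-trivial $S \in \add S_e$ with $\Ext^k_R(S_e, S) = 0$ for all $k > 0$. But any simple summand of $S$ would then be a source of the Hamilton cycle, which is absurd, completing the contradiction and thus the proof.
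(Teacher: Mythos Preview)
Your proof is correct and follows essentially the same strategy as the paper: reduce by induction to the case where the $\Ext^{>0}$-quiver of $S_e$ is a single Hamilton cycle, verify the $m$-property from the resulting permutation-matrix pattern in \cref{tab:m-property}, and invoke \cref{thm:generalization} to produce a source, a contradiction. The only differences are cosmetic --- the paper shows acyclicity of the whole quiver directly via \cref{remark:label} rather than peeling off sources one at a time, and your appeal to ``minimality of the cycle'' in the $k=n$ case should more precisely be phrased as invoking that the arguments given for the cases $k<n$ already exclude \emph{every} cycle of length $<n$, not just the particular one found by back-tracking.
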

   \begin{proof}
      Choose a decomposition $S_e = M_1 \oplus \cdots \oplus M_n$ into simple summands and let $Q$ be the $\Ext^{>0}$-quiver of $S_e$.
      Recall that $\add S_e$ is Krull--Schmidt by \cite[Theorem 12.6]{anderson_rings_1992}.
      Hence, $Q$ has precisely $n$ vertices.
      By \Cref{remark:label} the statement of the \namecref{thm:sandwiched_subrings} is equivalent to $Q$ not containing any oriented cycle, since the simple summands of $S_e$ are the vertices of $Q$ and non-trivial extensions between simple summands of $S_e$ belong to arrows in $Q$.
      We use induction on the number $n$ of simple summands in $S_e$ to show that $Q$ has no oriented cycles.
      For $n = 0$ the statement is trivial so let $n > 0$ and suppose the \namecref{thm:sandwiched_subrings} holds for all $n' < n$.

      Let $e' \in R$ be an idempotent so that $\Gamma_e \subsetneq \Gamma_{e'} \subset R$.
      Then the $\Ext^{>0}$-quiver of $S_{e'}$ is the proper full subquiver of $Q$ with the direct summands of $S_{e'}$ as vertices.
      Notice that every proper full subquiver of $Q$ arises this way as the $\Ext^1$-quiver of $S_{e'}$ for an idempotent $e' \in R$ with $\Gamma_e \subsetneq \Gamma_{e'} \subseteq R$.
      By induction hypothesis we can therefore assume that no proper full subquiver of $Q$ has any oriented cycles.

      Now suppose $Q$ had an oriented cycle.
      Every oriented cycle of $Q$ has to contain every vertex at least once because otherwise it would be contained in a proper full subquiver.
      Any arrow not contained in an oriented cycle of minimal length in $Q$ would give rise to an oriented cycle of smaller length using that there is at most one arrow between each pair of vertices in $Q$.
      So an oriented cycle of minimal length in $Q$ has to contain all vertices and all arrows of $Q$.
      This means that $Q$ is \emph{the} oriented cycle of length $n$.

      Now, $m(S_e,S_e)$ is finite because $R$ has finite global dimension.
      Next, there is an entry neither equal to $0$ nor to $-\infty$ in row $i$ and column $j$ of \Cref{tab:m-property} if and only if there is an arrow $M_i \to M_j$ in $Q$.
      As $Q$ is an oriented cycle there appears exactly one value in each row and each column which is neither $0$ nor $-\infty$, while all the other values are either $0$ or $-\infty$.
      This means that $S_e$ has the $m$-property by \Cref{remark:mproperty} as for each row the unique value outside $\{-\infty, 0\}$ is maximal within its column.
      However, then \Cref{thm:generalization} shows that there is a source vertex $S$ in $Q$.
      This is a contradiction because $Q$ is the oriented cycle of length $n$ and there are no source vertices in an oriented cycle.
      Hence, the induction is complete.
   \end{proof}

   Finally, recall the notion of directed rings, which are quasi-hereditary and hence of finite global dimension:

   \begin{definition} \label{def:triangular}
      An artinian ring $R$ is called \emph{directed} if there is a complete set of primitive idempotents $e_1, \dots, e_n \in R$ such that $e_i R e_j$ vanishes for all $1 \leq i < j \leq n$ and such that $e_i R e_i$ is a skew field for all $1 \leq i \leq n$. 
   \end{definition}

   \begin{corollary} \label{corollary:triangular}
      Let $R$ be a semiperfect noetherian basic ring of finite global dimension and $e \in R$ be an idempotent such that $R/\langle 1-e\rangle$ is right artinian.
      Then all idempotent subrings sandwiched between $\Gamma_e$ and $R$ have finite global dimension if and only if $Y_e$ is a directed artinian ring.
   \end{corollary}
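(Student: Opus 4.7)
The plan is to read off the directed structure of $Y_e$ directly from \cref{thm:sandwiched_subrings}, and then establish artinianness using the finiteness built into minimal projective resolutions of simples.

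First, the zero idempotent is trivially sandwiched between $e$ and $1$ (giving $\Gamma_0 = R$), so by hypothesis $\gl R < \infty$; hence $\Ext_R^k(S_e,S_e)$ vanishes for $k > d := \gl R$ and $Y_e$ is concentrated in degrees $0, \ldots, d$. All hypotheses of \cref{thm:sandwiched_subrings} are met, producing a decomposition $S_e = S_1 \oplus \cdots \oplus S_n$ with $\Ext_R^k(S_i,S_j) = 0$ for $1 \leq j \leq i \leq n$ and $k > 0$. I would then reindex by setting $T_i := S_{n+1-i}$; combined with $\Hom_R(T_i,T_j) = 0$ for non-isomorphic simples, this yields $\Ext_R^\ast(T_i,T_j) = 0$ for $i < j$, while $\Ext_R^\ast(T_i,T_i) = \End_R(T_i)$ is concentrated in degree $0$ and is a skew field by Schur.

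Next, I would let $e_1,\ldots,e_n \in Y_e^0 = \End_R(S_e)$ be the idempotents projecting onto the respective summands $T_i$; these form a complete set of orthogonal idempotents in $Y_e$. Under the paper's composition convention the Yoneda product identifies $e_i Y_e e_j$ with $\Ext_R^\ast(T_i,T_j)$, so by the previous paragraph $e_i Y_e e_j = 0$ for $i < j$, while $e_i Y_e e_i = \End_R(T_i)$ is local. Hence each $e_i$ is primitive in $Y_e$ and the conditions of \cref{def:triangular} are verified.

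It remains to establish that $Y_e$ is artinian. Using that $R$ is right noetherian, each $T_i$ admits a minimal projective resolution with finitely generated projective terms $P_k^{(i)}$, and then $\Ext_R^k(T_i,T_j) = \Hom_R(P_k^{(i)}, T_j)$ is a free right $\End_R(T_j)$-module whose rank equals the multiplicity of $T_j$ in the top of $P_k^{(i)}$. Summing over the finitely many pairs $i,j$ and the degrees $k \leq d$ exhibits $Y_e$ as a finitely generated semisimple right $Y_e^0$-module, hence of finite length, which forces right artinianness of $Y_e$ as a right $Y_e$-module. For left artinianness one observes that $Y_e$ is semiprimary with nilpotent radical $\bigoplus_{k>0} Y_e^k$ whose successive layers are bimodules over the semisimple ring $Y_e^0$, so one argues symmetrically. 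I expect the main delicate point to be confirming this bimodule finiteness on both sides of $Y_e^0$ in the absence of a base field; once it is handled via the minimal projective resolutions and the structural symmetry forced by the Peirce decomposition, directedness of $Y_e$ is immediate.
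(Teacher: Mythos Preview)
Your argument for directedness is essentially the paper's: invoke \cref{thm:sandwiched_subrings}, take the projection idempotents $\id_{S_i} \in \End_R(S_e) = Y_e^0$, and identify $e_i Y_e e_j \cong \Ext_R^\ast(T_i,T_j)$ via the Yoneda product. Your explicit reindexing $T_i := S_{n+1-i}$ just makes visible a relabelling that the paper's proof performs silently.

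The only substantive difference is how artinianness is handled. The paper dispatches it in one sentence by citing \cite[section~4, p.~312]{ingalls_homological_2017}, whereas you attempt a self-contained proof. Your argument for \emph{right} artinianness is correct: $\Ext_R^k(T_i,T_j) \cong \Hom_R(P_k^{(i)},T_j)$ is free of finite rank over $D_j := \End_R(T_j)$ (this is exactly \cref{remark:determine}), so $Y_e$ has finite length as a right $Y_e^0$-module and hence as a right $Y_e$-module. However, your sketch for \emph{left} artinianness does not go through by the ``symmetry'' you invoke. The left $D_i$-action on $\Ext_R^k(T_i,T_j)$ arises by lifting endomorphisms of $T_i$ along the resolution, and for unrelated skew fields $D_i, D_j$ there is no reason a $(D_i,D_j)$-bimodule that is finite over $D_j$ should be finite over $D_i$; the triangular matrix ring $\bigl(\begin{smallmatrix} D_1 & M \\ 0 & D_2 \end{smallmatrix}\bigr)$ already shows right and left artinianness are independent in this shape. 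Observing that $Y_e$ is semiprimary with nilpotent positive part only reduces the question to the same bimodule finiteness on each radical layer, and the Peirce decomposition by itself enforces no such finiteness. So the point you yourself flag as ``delicate'' is a genuine gap in your write-up; the paper's route is simply to defer to the cited reference rather than reprove it.
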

   \begin{proof}
      If $Y_e$ is a directed artinian ring then all idempotent subrings of $Y_e$ are directed artinian rings too. 
      In particular, for any idempotent $e'$ with $\Gamma_e \subset \Gamma_{e'} \subset R$ the ring $Y_{e'}$ is artinian and of finite global dimension.
      It follows from \cite[Theorem 1.1]{ingalls_homological_2017} that $\Gamma_{e'}$ has finite global dimension.

      Conversely, $\gl R < \infty$ implies that $Y_e$ is artinian (see \cite[Section 4, page 312]{ingalls_homological_2017}).
      By \Cref{thm:sandwiched_subrings} there is a labeling $S_1 \oplus \cdots \oplus S_n = S_e$ of the simple summands of $S_e$ such that $\Ext^k_R(S_i, S_j) = 0$ for $1 \leq i \leq j \leq n$ and $k > 0$.
      Let $f_i \in \End_R (S_e)$ be the idempotent corresponding to $S_i$ for $1 \leq i \leq n$.
      Then $\Hom_{Y_e}(f_j Y_e, f_i Y_e) = f_i Y_e f_j \cong \Ext^\ast_R(S_j, S_i) = 0$ for $1 \leq i < j \leq n$ and $\Hom_{Y_e}(f_i Y_e, f_i Y_e) = f_i Y_e f_i \cong \Ext^\ast_R(S_i, S_i) = \Hom_R(S_i, S_i)$ is a skew field for $1 \leq i \leq n$.
      This shows that $Y_e$ is a directed artinian ring.
   \end{proof}

   \begin{example}
      Consider the quivers 
      \begin{align*} 
         Q  &:= \includeformula{tikz/example-1}  & Q' &:= \includeformula{tikz/example-2}
      \end{align*}
      and the algebra $R := \kk Q/I$, where $I = \langle \gamma \alpha, \beta \gamma \rangle$. 
      Let $S := S_{e_1 + e_2 + e_3}$.
      An easy calculation shows that $R$ has global dimension $3$ and the $\Ext^{>0}$-quiver of $S$ has the shape of $Q'$. 
      Furthermore, $Y^{\op}_{e_1 + e_2 + e_3} \cong \kk Q / I^{\perp}$, where $I^{\perp} = \langle \alpha \beta\rangle$ and $\alpha$, $\beta$ and $\gamma$ are all of degree $1$.
      Notice, here $Y^{\op}_{e_1+e_2+e_3}$ has a $\kk$-basis given by the vertices and arrows of $Q'$.

      The algebras $\Gamma_{e_1} \cong \kk A_2$, $\Gamma_{e_3} \cong \kk A_2$ and $\Gamma_{e_1+e_3} \cong \kk$, where $A_2 = \includeformula[height=.9em]{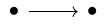}$, have finite global dimension.
      Therefore, $Y_{e_1 + e_3} \cong \kk A_2$ must indeed be directed artinian by \Cref{corollary:triangular}.
      On the other hand, the subalgebra $\Gamma_{e_2} \cong \kk(\includeformula[height=.9em]{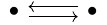})/\rad^2 \kk(\includeformula[height=.9em]{tikz/A2-tilde})$ has infinite global dimension, so $Y_{e_2} \cong \kk[X]/\langle X^2 \rangle$ must indeed not be directed artinian.

      Notice, $\Gamma_{e_1 + e_2} \cong \kk$ and $\Gamma_{e_2 + e_3} \cong \kk$ have finite global dimension even though $Y_{e_1+e_2}$ and $Y_{e_2+e_3}$ are \emph{not} directed.
      This is possible since \Cref{corollary:triangular} only implies the existence of a \emph{sandwiched} idempotent subalgebra of infinite global dimension, here $\Gamma_{e_2}$.
   \end{example}

   \begin{acknowledgment}
      This paper is based on the author's master's thesis. 
      Special thanks to Steffen König for his professional and linguistic support and advice throughout the writing of my master's thesis and the creation of this paper, as well as his corrections.
      Thanks to Jonas Dallendörfer for checking my master's thesis for linguistic mistakes.
      Thanks to the anonymous Referee for very useful suggestions.

      Also special thanks to Peter Jørgensen for professional support and advice and for his corrections.
      
      This work has been financially supported by the Aarhus University Research Foundation (grant no.\s\ AUFF-F-2020-7-16).
   \end{acknowledgment}

   \bibliographystyle{alpha}

\begin{thebibliography}{GPS21}
      \bibitem[AF92]{anderson_rings_1992}
      \newblock Frank W.\s\ Anderson and Kent R.\s\ Fuller,
      \newblock \emph{Rings and categories of modules}, 
      \newblock 2nd ed., Graduate Texts in Mathematics, vol.\s\ 13, Springer-Verlag, New York, 1992,
      \newblock \doi{10.1007/978-1-4612-4418-9}.
      
      \bibitem[Aus71]{auslander1971representation}
      \newblock M.\s\ Auslander,
      \newblock \emph{Representation dimension of Artin algebras}, 
      \newblock Lecture notes, Queen Mary College, London, 1971.
      
      \bibitem[BP20]{bravo_idempotent_2019}
      \newblock Diego Bravo and Charles Paquette,
      \newblock \emph{Idempotent reduction for the finitistic dimension conjecture}, 
      \newblock Proc.\s\ Amer.\s\ Math.\s\ Soc.\s\ \textbf{148} (2020), no.\s\ 5, 1891–1900,
      \newblock \doi{10.1090/proc/14945}.
      
      \bibitem[GPS21]{green_reduction_2018}
      \newblock Edward L.\s\ Green, Chrysostomos Psaroudakis, and Øyvind Solberg,
      \newblock \emph{Reduction techniques for the finitistic dimension}, 
      \newblock Trans.\s\ Amer.\s\ Math.\s\ Soc.\s\ \textbf{374} (2021), no.\s\ 10, 6839–6879, 
      \newblock \doi{10.1090/tran/8409}.
      
      \bibitem[ILP11]{igusa_proof_2011}
      \newblock Kiyoshi Igusa, Shiping Liu, and Charles Paquette,
      \newblock \emph{A proof of the strong no loop conjecture},
      \newblock Adv.\s\ Math.\s\ \textbf{228} (2011), no.\s\ 5, 2731–2742,
      \newblock \doi{10.1016/j.aim.2011.06.042}.
      
      \bibitem[IP16]{ingalls_homological_2015}
      \newblock Colin Ingalls and Charles Paquette,
      \newblock \emph{Homological dimensions for co-rank one idempotent subalgebras}, 
      \newblock Trans.\s\ Amer.\s\ Math.\s\ Soc.\s\ \textbf{369} (2017), no.\s\ 8, 5317–5340,
      \newblock \doi{10.1090/tran/6815}.
      
      \bibitem[IP20]{ingalls_homological_2017}
      \newblock Colin Ingalls and Charles Paquette,
      \newblock \emph{Homological behavior of idempotent subalgebras and Ext algebras}, 
      \newblock Sci.\s\ China Math.\s\ \textbf{63} (2020), no.\s\ 2, 309–320, 
      \newblock \doi{10.1007/s11425-017-9308-1}.
      
      \bibitem[Jø09]{jorgensen_auslander-reiten_2006}
      \newblock Peter Jørgensen,
      \newblock \emph{Auslander--Reiten triangles in subcategories},
      \newblock J.\s\ K-Theory \textbf{3} (2009), no.\s\ 3, 583–601, 
      \newblock \doi{10.1017/is008007021jkt056}.
      
      \bibitem[Kra10]{krause_localization_2009}
      \newblock Henning Krause,
      \newblock \emph{Localization theory for triangulated categories}, 
      \newblock Triangulated categories, London Math.\s\ Soc.\s\ Lecture Note Ser., vol.\s\ 375, Cambridge Univ.\s\ Press, Cambridge, 2010, pp.\s\ 161–235, 
      \newblock \doi{10.1017/CBO9781139107075.005}.
      
      \bibitem[Psa14]{psaroudakis_homological_2014}
      \newblock Chrysostomos Psaroudakis,
      \newblock \emph{Homological theory of recollements of abelian categories},
      \newblock J.\s\ Algebra \textbf{398} (2014), 63–110, 
      \newblock \doi{10.1016/j.jalgebra.2013.09.020}. 
   \end{thebibliography}
   
\end{document}